\newtheorem{theorem}{Theorem}
\newtheorem{definition}{Definition}
\newtheorem{corollary}[definition]{Corollary}
\newtheorem{example}[definition]{Example}
\newtheorem{proposition}[definition]{Proposition}
\newtheorem{remark}[definition]{Remark}
\newcommand{\mclib}{\color{red}}
\newcommand{\mclie}{\color{black}}
\newenvironment{proof}[1][Proof]{\noindent \textbf{#1.} }{\  \rule{0.5em}{0.5em}}
\begin{document}

\title{Covering relations for coupled map networks}
\author{Leonid Bunimovich\thanks{%
School of Mathematics, Georgia Institute of Technology, 686 Cherry Street,
Atlanta, GA 30332-0160, USA, E-mail: bunimovh@math.gatech.edu} , Ming-Chia Li%
\thanks{%
Department of Applied Mathematics, National Chiao Tung University, 1001 Ta
Hsueh Road, Hsinchu 300, TAIWAN, Tel:\ +866-3-5712121 ext. 56463, Fax:\
+866-3-5131223, E-mail: mcli@math.nctu.edu.tw} \ and Ming-Jiea Lyu\thanks{%
Department of Applied Mathematics, National Chiao Tung University, 1001 Ta
Hsueh Road, Hsinchu 300, TAIWAN}}
\maketitle

\begin{abstract}
Following \cite{KL10, Y08}, we study coupled map networks over arbitrary
finite graphs. An estimate from below for a topological entropy of a
perturbed coupled map network via a topological entropy of an unperturbed
network by making use of the covering relations for coupled map networks is
obtained. The result is quite general, particularly no assumptions on
hyperbolicity of a local dynamics or linearity of coupling are made.\medskip 

\noindent \textbf{Mathematics Subject Classification:} 37C15, 37C75, 37B10,
37B40, 37L60\medskip 

\noindent \textbf{Keywords:} Coupled map network, covering relation,
topological entropy, symbolic dynamics, perturbation, Brouwer degree
\end{abstract}

\section{Introduction}

A coupled map network is characterized by local dynamics operating at each
node of a graph and their interaction along the edges of the graph. Coupled
map networks are useful in applications:\ they appear naturally as
electronic circuits in engineering, as chemical reactions in physics, as
neuronal networks in the biological sciences, and as various agent-based
models in the social sciences, etc. These networks are usually finite in
size, but can be very large. The couplings of them can be linear as well as
nonlinear. Much attention has been paid to linear coupling and simple
dynamical behaviors such as existence of a global attractor in \cite{AB07,
ABM10} and synchronization in \cite{BAJ09, F08}; see also \cite{CF05} and
references therein, while more complex phenomena are known to occur but not
as well recognized and understood. Recently in \cite{KL10}, a linear
coupling of expanding circle maps was studied. It was found there that an
increasing of coupling strength leads to a cascade of bifurcations in which
unstable subspaces in the coupled map systematically become stable.

We study here a topological dynamics in coupled map networks without
assuming hyperbolicity of local maps and linearity of interactions. Consider
a coupled map network with local dynamics having covering relations and
coupling having linear model. We give sufficient conditions for existence of
periodic points and for existence of a positive topological entropy, in the
coupled map network. Both conditions allow for a weak as well as for a
strong coupling. Moreover, both results are also valid for small
perturbations of the coupled map network, of which coupling might be
nonlinear.

Our approach is based on the concept of covering relations, introduced by 
\cite{ZG04}. The covering relation is a topological technique which does not
require hyperbolicity (see e.g. \cite{MZ01, LL11, LLZ08}). Assuming that a
coupling is locally topologically conjugate to a linear coupling, we show
that the unperturbed coupled map network and its small perturbations both
have covering relations of local dynamics as well as existence of periodic
points. To implement a topological chaos from local dynamics to perturbed
coupled map networks, we introduce a notion of unified sets to guarantee the
conjugacy relation between the coupling and its linear model.

The paper is organized as follows. In Section 2, we define coupled map
networks (here and in other places) of two types and state main results for
each of them. In Section 3, we begin with formulation of known results
needed in what follows and present the proofs of our results. In Appendix,
the definition of covering relations determined by a transition matrix is
briefly recalled.

\section{Definitions and statements of the main results}

We start with the definition of a general class of coupled map networks
which will be studied.

\begin{definition}
A coupled map network is a triple $(G,\{T_{k}\},A)$ where

\begin{enumerate}
\item $G$ is a connected directed graph specified by a finite set $\Omega $
of nodes and a collection of edges $\mathcal{E}\subset \Omega \times \Omega $%
;

\item to each node $k\in \Omega $ there corresponds a local space $X_{k}$
and a local map $T_{k}:X_{k}\rightarrow X_{k};$

\item network dynamics is defined by the iteration of $\Phi :X\rightarrow X$%
, where $X=\prod_{k\in \Omega }X_{k}$ is the product space and $\Phi =A\circ
T$, where $T=\prod_{k\in \Omega }T_{k}$ is the (independent) application of
local maps and $A:X\rightarrow X$ is the spatial interaction or coupling;
for $x=(x_{k})_{k\in \Omega }\in X$, the $k$th coordinate of $A(x)$ depends
only on $x_{k}$ and those $x_{j}$ for which $(j,k)\in \mathcal{E}$.
\end{enumerate}
\end{definition}

We consider perturbations of coupled map networks in the following sense.

\begin{definition}
Let $(G,\{T_{k}\},A)$ and $(\tilde{G},\{ \tilde{T}_{k}\},\tilde{A})$ be two
coupled map networks with the same set of nodes (might with distinct edges)
and local spaces. For $\varepsilon >0$, we say that $(\tilde{G},\{ \tilde{T}%
_{k}\},\tilde{A})$ is $\varepsilon $-close to $(G,\{T_{k}\},A)$ if $|\tilde{A%
}(z)-A(z)|<\varepsilon $ for all $z\in X$ and $|\tilde{T}_{k}(x)-T_{k}(x)|<%
\varepsilon $ for all $x\in X_{k}$ for all nodes $k$, where $|\cdot |$
denotes norms on the product space and on the local spaces without ambiguity.
\end{definition}

Given a coupled map network $(G,\{T_{k}\},A)$, if $(\tilde{G},\{ \tilde{T}%
_{k}^{\varepsilon }\},\tilde{A}^{\varepsilon })$ is a family of coupled map
networks of the same nodes and local spaces, with a real parameter $%
\varepsilon $, such that $\tilde{T}_{k}^{\varepsilon
}(x_{k})=T_{k}(x_{k})+\varepsilon \alpha (x_{k})$ and $\tilde{A}%
^{\varepsilon }(x)=A(x)+\varepsilon \beta (x)$, where both $\alpha $ and $%
\beta \ $are bounded and continuous functions, then $(\tilde{G},\{ \tilde{T}%
_{k}^{\varepsilon }\},\tilde{A}^{\varepsilon })$ is approaching $%
(G,\{T_{k}\},A)$ as $\varepsilon $ tends to zero.

For a positive integer $m$, let $\mathbb{R}^{m}$ denote the space of all $m$%
-tuples of real numbers. Let $\left \vert \cdot \right \vert $ be a given norm
on $\mathbb{R}^{m}$, and let $\left \Vert \cdot \right \Vert $ denote the
operator-norm on the space of linear maps on $\mathbb{R}^{m}$ induced by $%
\left \vert \cdot \right \vert $. For $x\in \mathbb{R}^{m}$ and $r>0,$ we
denote $B^{m}(x,r)=\{z\in \mathbb{R}^{m}:\left \vert z-x\right \vert <r\}$;
for the particular case when $x=0$ and $\ r=1$, we write $B^{m}=B^{m}(0,1)$,
that is, the open unit ball in $\mathbb{R}^{m}$. Furthermore, for any subset 
$S$ of $\mathbb{R}^{m},$ let $\overline{S},$ $\mathtt{int}(S)$ and $\partial
S$ denote the closure, interior and boundary of $S$, respectively. For the
definition of \emph{covering relations determined by a transition matrix},
see Appendix.

\begin{definition}
Let $F$ be a continuous map on $\mathbb{R}^{m}$. Define the maximum stretch $%
||F||_{\max }=\max \{|F(x)|:\ x\in \overline{B^{m}}\}$ and the minimum
stretch $||F||_{\min }=\min \{|F(x)|:\ x\in \partial B^{m}\}$.
\end{definition}

The maximum and minimum stretches are the radii of the smallest ball with
center at origin that contains $F(\overline{B^{m}})$ and of the largest open
ball with center at origin not intersecting $F(\partial B^{m})$. If $F$ is a
linear map, the maximum and minimum stretches are the norm and conorm of $F.$

From now on, we consider a coupled map network $(G,\{T_{k}\},A)$ such that $%
G $ is a connected directed graph with nodes $\Omega =\{1,\ldots ,d\}$ and
edges $\mathcal{E}\subset \Omega \times \Omega $, and for $1\leq k\leq d$, $%
T_{k}$ is a continuous local map on $\mathbb{R}^{u+s}$ having covering
relations on h-sets $\{M_{ki}\}_{i=1}^{d_{k}}$ determined by a transition
matrix $W_{k}=[w_{kij}]_{1\leq i,j\leq d_{k}}$ such that $u(M_{ki})=u$ and $%
s(M_{ki})=s$.

We say that the coupled map network $(G,\{T_{k}\},A)$ is of \emph{type I}
with \emph{locally linear coupling}, if for each nonzero entry $%
\prod_{k=1}^{d}w_{ki_{k}j_{k}}$ of the Kronecker product $%
\bigotimes_{k=1}^{d}W_{k}$, there exists a $d\times d$ invertible real
matrix $[a_{lm}]$ satisfying $(m,l)\notin \mathcal{E}$ implies $a_{lm}=0$,
and the following conditions hold:\ 

\begin{itemize}
\item For $1\leq k\leq d$ and $1\leq i^{\prime },j^{\prime }\leq d_{k}$ with 
$i^{\prime }\neq i_{k}$ and $j^{\prime }\neq j_{k},$%
\begin{equation}
T_{k}(M_{ki_{k}})\cap (M_{kj^{\prime }}\cup T(M_{ki^{^{\prime
}}}))=\emptyset .  \label{eq: not overlap}
\end{equation}

\item For $1\leq k\leq d$ and $x\in \overline{B^{u}},y\in \overline{B^{s}},$ 
\begin{equation}
c_{M_{kj_{k}}}\circ T_{k}\circ
c_{M_{ki_{k}}}^{-1}(x,y)=(U_{ki_{k}j_{k}}(x),V_{ki_{k}j_{k}}(y)),
\label{eq: UV1}
\end{equation}%
where $U_{ki_{k}j_{k}}$ and $V_{ki_{k}j_{k}}$ are continuous maps on $%
\mathbb{R}^{u}$ and $\mathbb{R}^{s}$, respectively, such that 
\begin{equation}
||U_{ki_{k}j_{k}}||_{\min }>1,\deg (U_{ki_{k}j_{k}j},B^{u},0)\neq 0,\text{
and }||V_{ki_{k}j_{k}}||_{\max }<1.  \label{eq: UVnorm1}
\end{equation}

\item For $z\in (\prod_{k=1}^{d}c_{M_{kj_{k}}})\circ
T(\prod_{k=1}^{d}M_{ki_{k}})$, 
\begin{equation}
(\prod_{k=1}^{d}c_{M_{kj_{k}}})\circ A\circ
(\prod_{k=1}^{d}c_{M_{kj_{k}}})^{-1}(z)=([a_{lm}]\bigotimes I)z,
\label{eq: Ac1}
\end{equation}
\end{itemize}

where $I$ is the $(u+s)\times (u+s)$ identity matrix. 

Notice that (\ref{eq: UV1}) and (\ref{eq: UVnorm1}) are only to specify the
covering relation $M_{ki_{k}}\overset{T_{k}}{\Longrightarrow }M_{kj_{k}}$.
From (\ref{eq: not overlap}) it follows that (\ref{eq: Ac1}) is well defined
and it says that the restriction of $A$ to the set $T(%
\prod_{k=1}^{d}M_{ki_{k}})$ is topologically conjugate to the linear map $%
A_{c}$, by the homeomorphism $\prod_{k=1}^{d}c_{M_{kj_{k}}}$. In general,
the map on the left-hand side of (\ref{eq: Ac1}) is not well defined.

Now, we state the first result about covering relations and existence of
periodic points for perturbations of coupled map networks, under permutation
transition matrices.

\begin{theorem}
\label{thm: main-periodic point} Let $(G,\{T_{k}\},A)$ be a coupled map
network of type I with locally linear coupling as in (\ref{eq: Ac1}) such
that each of the transition matrices $W_{k}$, $1\leq k\leq d$, is a
permutation. Suppose that for each nonzero entry $%
\prod_{k=1}^{d}w_{ki_{k}j_{k}}$ of the Kronecker product $%
\bigotimes_{k=1}^{d}W_{k}$, there exists a permutation $\tau $ on $%
\{1,2,\ldots ,d\}$ such that for $1\leq k\leq d$, 
\begin{equation}
||a_{k\tau (k)}U_{\tau (k)i_{\tau (k)}j_{\tau (k)}}||_{\min
}-\sum_{l=1,l\neq \tau (k)}^{d}||a_{kl}U_{li_{l}j_{l}}||_{\max }>1\text{ and 
}\sum_{l=1}^{d}||a_{kl}V_{li_{l}j_{l}}||_{\max }<1.  \label{eq: thm1}
\end{equation}%
Then any coupled map network $(\tilde{G},\{ \tilde{T}_{k}\},\tilde{A})$
sufficiently close to $(G,\{T_{k}\},A)$ has covering relations determined by 
$\bigotimes_{k=1}^{d}W_{k}$ and has a periodic point of period $\mathtt{lcm}%
(\dim W_{1},\ldots ,\dim W_{d}))$, where $\mathtt{lcm}$ stands for the least
common multiple.
\end{theorem}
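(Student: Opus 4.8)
The plan is to work in the product coordinates and exploit that, on each admissible pair of product h-sets, the map $\Phi=A\circ T$ splits into an unstable and a stable block. For each nonzero entry $\prod_{k}w_{ki_{k}j_{k}}$ of $\bigotimes_{k}W_{k}$ I set $M=\prod_{k}M_{ki_{k}}$ and $N=\prod_{k}M_{kj_{k}}$, with charts $\prod_{k}c_{M_{ki_{k}}}$ and $\prod_{k}c_{M_{kj_{k}}}$; since the $\{M_{ki}\}_{i}$ are pairwise disjoint, the product h-sets are pairwise disjoint, and they sit in $\mathbb{R}^{d(u+s)}$ with $u(M)=u(N)=du$ and $s(M)=s(N)=ds$ once we equip $\mathbb{R}^{du}$ and $\mathbb{R}^{ds}$ with the block-maximum norm (so a product of unit balls is again a unit ball, i.e. $\overline{B^{du}}=\prod_{m}\overline{B^{u}}$). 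By (\ref{eq: UV1}) the map $T$ reads $(x_{k},y_{k})\mapsto(U_{ki_{k}j_{k}}(x_{k}),V_{ki_{k}j_{k}}(y_{k}))$ in these coordinates, while (\ref{eq: not overlap}) makes the left side of (\ref{eq: Ac1}) well defined, so $A$ reads as $[a_{lm}]\otimes I$. Composing and regrouping $X=(x_{1},\dots,x_{d})$ and $Y=(y_{1},\dots,y_{d})$, I would record the structural fact that $\Phi$ in coordinates is $(X,Y)\mapsto(\hat{U}(X),\hat{V}(Y))$ with $\hat{U}(X)_{l}=\sum_{m}a_{lm}U_{mi_{m}j_{m}}(x_{m})$ and $\hat{V}(Y)_{l}=\sum_{m}a_{lm}V_{mi_{m}j_{m}}(y_{m})$: the coupling mixes nodes but never mixes unstable with stable directions.

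I would then verify that this split map realizes $M\overset{\Phi}{\Longrightarrow}N$. The stable estimate is immediate: for $Y\in\overline{B^{ds}}$ the triangle inequality and the second inequality of (\ref{eq: thm1}) give $|\hat{V}(Y)_{l}|\le\sum_{m}||a_{lm}V_{mi_{m}j_{m}}||_{\max}<1$, hence $||\hat{V}||_{\max}<1$. The unstable estimate is where the permutation $\tau$ enters, and I expect it to be the main obstacle. Given $X\in\partial B^{du}$, in the block-maximum norm some block satisfies $x_{m_{0}}\in\partial B^{u}$; choosing the output index $l_{0}=\tau^{-1}(m_{0})$ and applying the reverse triangle inequality together with $|U_{m_{0}\cdots}(x_{m_{0}})|\ge||U_{m_{0}\cdots}||_{\min}$ and $|U_{m\cdots}(x_{m})|\le||U_{m\cdots}||_{\max}$ for $m\neq m_{0}$, the first inequality of (\ref{eq: thm1}) (with $k=l_{0}$) yields $|\hat{U}(X)_{l_{0}}|>1$, so $||\hat{U}||_{\min}>1$. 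The role of $\tau$ is precisely to guarantee that whichever block is extremal on the boundary, there is an output coordinate in which that block dominates the coupling; without such a matching the coupled terms could cancel the expansion.

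For the degree I would write $\hat{U}=([a_{lm}]\otimes I_{u})\circ(U_{1i_{1}j_{1}},\dots,U_{di_{d}j_{d}})$ and use multiplicativity of the Brouwer degree: $\det([a_{lm}]\otimes I_{u})=(\det[a_{lm}])^{u}\neq0$ since $[a_{lm}]$ is invertible, while the product map on $\overline{B^{du}}=\prod_{m}\overline{B^{u}}$ has degree $\prod_{m}\deg(U_{mi_{m}j_{m}},B^{u},0)\neq0$ by (\ref{eq: UVnorm1}); hence $\deg(\hat{U},B^{du},0)\neq0$. The covering relation then follows from the homotopy $h(t,X,Y)=(\hat{U}(X),(1-t)\hat{V}(Y))$: the exit condition holds because $||\hat{U}||_{\min}>1$ keeps the image of the exit face outside the target ball, the condition on the top face holds because $||\hat{V}||_{\max}<1$ keeps the stable image inside the open ball throughout, and the terminal map is $\varphi=\hat{U}$ with the nonzero degree just computed. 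Thus $\Phi$ has covering relations determined by $\bigotimes_{k}W_{k}$.

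It remains to pass to perturbations and extract periodic points. All ingredients used are open: the disjointness in (\ref{eq: not overlap}) is between compact sets at positive distance, and the stretch inequalities and nonzero degree are stable. Consequently $M\overset{\Phi}{\Longrightarrow}N$ persists for any $\tilde{\Phi}=\tilde{A}\circ\tilde{T}$ that is $C^{0}$-close to $\Phi$, even when $\tilde{A}$ is nonlinear and $\tilde{T}$ no longer splits, because a covering relation is preserved under small $C^{0}$ perturbations of the map: concatenate a short straight-line homotopy from $\tilde{\Phi}_{c}$ to $\Phi_{c}$ with the homotopy above, noting that the positive-distance conditions survive and the degree is unchanged. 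Finally, since each $W_{k}$ is a permutation, $\bigotimes_{k}W_{k}$ is again a permutation matrix whose associated permutation is the product of those of the $W_{k}$; selecting in each factor an index lying on a full cycle produces an admissible periodic itinerary of period $\mathtt{lcm}(\dim W_{1},\dots,\dim W_{d})$. Invoking the standard consequence (recalled in Section 3) that a map with covering relations determined by $W$ realizes every admissible periodic $W$-itinerary by a periodic point, and using that the product h-sets are pairwise disjoint so consecutive iterates lie in distinct sets, I would conclude that $\tilde{\Phi}$ has a periodic point of period exactly $\mathtt{lcm}(\dim W_{1},\dots,\dim W_{d})$.
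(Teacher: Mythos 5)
Your proposal is correct and follows essentially the same route as the paper's proof: the same product h-sets with block-maximum norm, the same splitting of $\Phi$ into $(\hat{U},\hat{V})$ via the conjugacy (\ref{eq: Ac1}), the same use of $\tau$ to match the extremal block with a dominating output coordinate, the same homotopy $(\hat{U}(X),(1-t)\hat{V}(Y))$, the same degree computation via the composition $([a_{lm}]\otimes I_{u})\circ\prod_{m}U_{mi_{m}j_{m}}$, and the same persistence-plus-closed-loop conclusion (the paper simply cites \cite[Proposition 14]{LL11} and \cite[Theorem 9]{ZG04} where you sketch the arguments directly). No substantive differences or gaps.
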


Before further investigating topological chaos for perturbations of coupled
map networks, we introduce a notion of unified h-sets.

\begin{definition}
A $d$-tuple $(M_{1},\cdots ,M_{d})$ of disjoint h-sets in $\mathbb{R}^{m}$
with $u(M_{i})=u$ is said to be \emph{unified} by a subset $N$ of $\mathbb{R}%
^{m}$ if $\cup _{i=1}^{d}M_{i}\subset N$, and there exists a homeomorphism $%
\hat{c}_{N}:\mathbb{R}^{m}\rightarrow \mathbb{R}^{m}=\mathbb{R}^{u}\times 
\mathbb{R}^{m-u}$ such that for $1\leq i\leq d$, 
\begin{equation}
\hat{c}_{N}(N)=\overline{B^{u}}(q^{u},(3d-1)/2)\times \overline{B^{s}}(0,1)%
\text{ and }\hat{c}_{N}(M_{i})=\overline{B^{u}}(q_{i}^{u},1)\times \overline{%
B^{s}}(q_{i}^{s},r_{i}),  \label{eq: unify}
\end{equation}%
where $q^{u}=((3d-3)/2,0,...,0)$ and $q_{i}^{u}=(3(i-1),0,...,0)$ belong to $%
\mathbb{R}^{u}$, and $q_{i}^{s}=(\tilde{q}_{i}^{s},0,...,0)$ belongs to $%
\mathbb{R}^{s}$ for some real numbers $|\tilde{q}_{i}^{s}|<1$ and $%
0<r_{i}\leq 1$. Here, we call $q_{i}^{u}$, $q_{i}^{s}$, and $r_{i}$, the $i$%
th \emph{unstable center}, \emph{stable center}, and \emph{radius of
stability} of $N$ respectively. In particular, any h-set is unified by
itself.
\end{definition}

Being unified is a topological aspect: a unified tuple means the union of
elements in the tuple is an enlarged h-set such that under the change of
coordinates as (\ref{eq: unify}), the union looks like a product of unstable
and stable balls, while the choice of centers and radii is flexible.

Let $A_{c}$ be the following Kronecker product 
\begin{equation}
A_{c}=[a_{lm}]\bigotimes I,  \label{eq: AC-unify}
\end{equation}%
where $[a_{lm}]$ is a $d\times d$ invertible real matrix such that if $%
(l,m)\notin \mathcal{E}$ then $a_{lm}=0$, and $I$ is the $(u+s)\times (u+s)$
identity matrix. We say that the coupled map network $(G,\{T_{k}\},A)$ is of 
\emph{type II }with the \emph{linear coupling model }$A_{c}$, if the
following conditions hold:\ 

\begin{itemize}
\item For $1\leq k\leq d$, there exists a set $N_{k}$ such that the tuple $%
(M_{k1},...,M_{kd_{k}})$ is unified by $N_{k}$ with the $j$th unstable
center at $p_{kj}^{u}$, stable center at $p_{kj}^{s}$, and stable radius $%
r_{kj}$ for all $1\leq j\leq d_{k}$.

\item For $1\leq k\leq d$, if $w_{kij}=1$, then for $x\in \overline{B^{u}}%
,y\in \overline{B^{s}},$ 
\begin{equation}
\hat{c}_{N_{k}}\circ T_{k}\circ \tilde{c}%
_{M_{ki}}^{-1}(x,y)=(U_{ki}(x),V_{ki}(y)),  \label{eq: UV2}
\end{equation}%
where 
\begin{equation}
g_{ki}(x,y)=(x-p_{ki}^{u},(y-p_{ki}^{s})/r_{ki})\text{ and }\tilde{c}%
_{M_{ki}}=g_{ki}\circ \hat{c}_{N_{k}},  \label{eq: qc}
\end{equation}%
and $U_{ki},V_{ki}$ are continuous maps on $\mathbb{R}^{u},\mathbb{R}^{s}$,
respectively, such that 
\begin{equation}
||U_{ki}-p_{kj}^{u}||_{\min }>1,\deg (U_{ki},B^{u},p_{kj}^{u})\neq 0,\text{
and }||V_{ki}-p_{kj}^{s}||_{\max }<r_{kj}.  \label{eq: UVnorm2}
\end{equation}

\item For $z\in (\prod_{k=1}^{d}\hat{c}_{N_{k}})\circ
T(\prod_{k=1}^{d}M_{ki}),$%
\begin{equation}
(\prod_{k=1}^{d}\hat{c}_{N_{k}})\circ A\circ (\prod_{k=1}^{d}\hat{c}%
_{N_{k}})^{-1}(z)=A_{c}z.  \label{eq: Ac2}
\end{equation}
\end{itemize}

Notice that (\ref{eq: UV2}) and (\ref{eq: UVnorm2}) are only to specify the
covering relation $M_{ki}\overset{T_{k}}{\Longrightarrow }M_{kj}$ under the
unified structure. With a help of (\ref{eq: qc}), each quadruple $(M_{ki},%
\tilde{c}_{M_{ki}},u,s)$ now is an h-sets in $\mathbb{R}^{u+s}$. Moreover,
since $(\prod_{k=1}^{d}\hat{c}_{N_{k}})$ is independent of $i$ and $j$, (\ref%
{eq: Ac2}) is always well defined and it says that the restriction of $A$ to
the set $T(\prod_{k=1}^{d}M_{ki})$ is topologically conjugate to the linear
map $A_{c}$, by the homeomorphism $(\prod_{k=1}^{d}\hat{c}_{N_{k}})$.

We give examples of coupled map networks of type II:\ one for $A=A_{c}$ and
the other for $A\neq A_{c}$.

\begin{example}
Define local dynamics by 
\begin{equation*}
T_{1}(x)=\left \{ 
\begin{array}{ll}
3.5x+1.5, & \text{if }x\leq 3/2, \\ 
2x-6, & \text{if }x>3/2,%
\end{array}%
\right. \text{ and }T_{2}(x)=\left \{ 
\begin{array}{ll}
2x+3, & \text{if }x\leq 3/2, \\ 
3.5x-9, & \text{if }x>3/2.%
\end{array}%
\right.
\end{equation*}%
Then $T_{1}$ has covering relations determined by $W_{1}=\left[ 
\begin{array}{cc}
1 & 1 \\ 
1 & 0%
\end{array}%
\right] $ on the h-set tuple $(M_{11}=[-1,1],M_{12}=[2,4]),$ with $%
c_{M_{11}}(x)=x,c_{M_{12}}(x)=x-3$ and $u=1,s=0$, which is unified by $%
N_{1}=[-1,4]$ with $\hat{c}_{N_{1}}(x)=x$ and unstable centers at $%
p_{11}^{u}=0$ and $p_{12}^{u}=3$. Also, $T_{2}$ has covering relations
determined by $W_{2}=\left[ 
\begin{array}{cc}
0 & 1 \\ 
1 & 1%
\end{array}%
\right] $ on the h-set tuple $(M_{21}=[-1,1],M_{22}=[2,4]),$ with $%
c_{M_{21}}(x)=x,c_{M_{22}}(x)=x-3$ and $u=1,s=0$, which is unified by $%
N_{2}=[-1,4]\ $with $\hat{c}_{N_{2}}(x)=x$ and unstable centers at $%
p_{11}^{u}=0$ and $p_{12}^{u}=3$. For $-1\leq x\leq 1$, let $%
U_{11}(x)=3.5x+1.5,U_{12}(x)=2x,U_{21}(x)=2x+3,$ and $U_{22}(x)=3.5x+1.5.$
Then (\ref{eq: UV2}) and (\ref{eq: UVnorm2}) hold. Let $G$ be a complete
graph with two nodes and define a coupling by $%
A(x,y)=(a_{11}x+a_{12}y,a_{21}x+a_{22}y)$. Then, $(G,\{T_{k}\},A)$ is of
type II with the linear coupling model $A_{c}=A$.
\end{example}

\begin{example}
Define local dynamics by 
\begin{equation*}
T_{1}(x)=\left \{ 
\begin{array}{ll}
3.5x-1, & \text{if }x\leq 5/2, \\ 
2x-7, & \text{if }x>5/2,%
\end{array}%
\right. \text{ and }T_{2}(x)=\left \{ 
\begin{array}{ll}
2x+1, & \text{if }x\leq 7/2, \\ 
3.5x-14, & \text{if }x>7/2.%
\end{array}%
\right.
\end{equation*}%
Then $T_{1}$ has covering relations determined by $W_{1}=\left[ 
\begin{array}{cc}
1 & 1 \\ 
1 & 0%
\end{array}%
\right] $ on the h-set tuple $(M_{11}=[0,2],M_{12}=[3,5]),$ with $%
c_{M_{11}}(x)=x-1,c_{M_{12}}(x)=x-4$ and $u=1,s=0$, which is unified by $%
N_{1}=[0,5]$ with $\hat{c}_{N_{1}}(x)=x-1$ and unstable centers at $%
p_{11}^{u}=0$ and $p_{12}^{u}=3$. Also, $T_{2}$ has covering relations
determined by $W_{2}=\left[ 
\begin{array}{cc}
0 & 1 \\ 
1 & 1%
\end{array}%
\right] $ on the h-set tuple $(M_{21}=[1,3],M_{22}=[4,6]),$ with $%
c_{M_{21}}(x)=x-2,c_{M_{22}}(x)=x-5$ and $u=1,s=0$, which is unified by $%
N_{2}=[1,6]\ $with $\hat{c}_{N_{2}}(x)=x-2$ and unstable centers at $%
p_{11}^{u}=0$ and $p_{12}^{u}=3$. For $-1\leq x\leq 1$, let $%
U_{11}(x)=3.5x+1.5,U_{12}(x)=2x,U_{21}(x)=2x+3,$ and $U_{22}(x)=3.5x+1.5.$
Then (\ref{eq: UV2}) and (\ref{eq: UVnorm2}) hold. Let $G$ be a complete
graph with two nodes and define a coupling by $%
A(x,y)=(a_{11}(x-1)+a_{12}(y-2)+1,a_{21}(x-1)+a_{22}(y-2)+2)$. Then, $%
(G,\{T_{k}\},A)$ is of type II with the linear coupling model $%
A_{c}(x,y)=(a_{11}x+a_{12}y,a_{21}x+a_{22}y)$.
\end{example}

Now, we state our result on covering relations and topological entropy of
perturbed coupled map networks.

\begin{theorem}
\label{thm: main} Let $(G,\{T_{k}\},A)$ be a coupled map network of type II
with the linear coupling model\emph{\ }$A_{c}$ as in (\ref{eq: AC-unify}).
Suppose that for each nonzero entry $\prod_{k=1}^{d}w_{ki_{k}j_{k}}$ of the
Kronecker product $\bigotimes_{k=1}^{d}W_{k}$, there exists a permutation $%
\tau $ on $\{1,\ldots ,d\}$ such that for $1\leq k\leq d$, $%
p_{kj_{k}}^{u}\in a_{k\tau (k)}U_{\tau (k)i_{\tau (k)}}(B^{u})$,and 
\begin{equation}
||a_{k\tau (k)}U_{\tau (k)i_{\tau (k)}}-p_{kj_{k}}^{u}||_{\min
}-\sum_{l=1,l\neq \tau (k)}^{d}||a_{kl}U_{li_{l}}||_{\max }>1,\text{ and }%
\sum_{l=1}^{d}||a_{kl}V_{li_{l}}-p_{kj_{k}}^{s}||_{\max }<r_{kj_{k}}.
\label{eq: thm2}
\end{equation}%
Then any coupled map network $(\tilde{G},\{ \tilde{T}_{k}\},\tilde{A})$
sufficiently close to $(G,\{T_{k}\},A)$ has covering relations determined by 
$\bigotimes_{k=1}^{d}W_{k}$ and has topological entropy bounded below by $%
\mathtt{\log }(\prod_{k=1}^{d}\rho (W_{k}))$.
\end{theorem}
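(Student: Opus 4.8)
The plan is to reduce the entropy estimate to the construction of covering relations for the perturbed dynamics and then to invoke the general machinery. Concretely, I would show that the unperturbed composed map $\Phi=A\circ T$, and then every sufficiently close $\tilde\Phi=\tilde A\circ\tilde T$, has covering relations determined by $\bigotimes_{k=1}^{d}W_{k}$ on the family of product h-sets $M_{\mathbf i}=\prod_{k=1}^{d}M_{ki_{k}}$ indexed by multi-indices $\mathbf i=(i_{1},\dots,i_{d})$. Granting this, the conclusion is immediate: by the standard consequences of covering relations (see \cite{ZG04} and the Appendix), a map with covering relations determined by a transition matrix $W$ is semiconjugate onto the subshift of finite type $\sigma_{W}$ and hence has topological entropy at least $\log\rho(W)$; and since the eigenvalues of a Kronecker product are the products of the eigenvalues of the factors, $\rho(\bigotimes_{k=1}^{d}W_{k})=\prod_{k=1}^{d}\rho(W_{k})$, while $\bigotimes_{k=1}^{d}W_{k}$ is again a transition matrix. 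This yields the bound $\log(\prod_{k=1}^{d}\rho(W_{k}))$.

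To organize the computation I would make each $M_{\mathbf i}$ an h-set in $\mathbb R^{d(u+s)}$ of unstable dimension $du$ and stable dimension $ds$: its chart is $\prod_{k=1}^{d}\tilde c_{M_{ki_{k}}}$ followed by the obvious permutation of coordinates grouping the $d$ unstable blocks and the $d$ stable blocks, and I would equip $\mathbb R^{du},\mathbb R^{ds}$ with the block-maximum norm, so that $\overline{B^{du}}=\prod_{k}\overline{B^{u}}$ and $\overline{B^{ds}}=\prod_{k}\overline{B^{s}}$ and the $M_{\mathbf i}$ are pairwise disjoint (distinct multi-indices differ in some coordinate $k$, where $M_{ki_{k}}\cap M_{kj_{k}}=\emptyset$). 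Applying (\ref{eq: UV2}) blockwise for $T$ and (\ref{eq: Ac2}) for $A$, and then passing to the target chart via $g_{kj_{k}}$ from (\ref{eq: qc}), I would compute that in charts the map $c_{M_{\mathbf j}}\circ\Phi\circ c_{M_{\mathbf i}}^{-1}$ takes the product form $f_{c}(x,y)=(P(x),Q(y))$, whose $k$th unstable and stable blocks are
\[
P_{k}(x)=\sum_{l=1}^{d}a_{kl}U_{li_{l}}(x_{l})-p_{kj_{k}}^{u},\qquad Q_{k}(y)=\Big(\sum_{l=1}^{d}a_{kl}V_{li_{l}}(y_{l})-p_{kj_{k}}^{s}\Big)\big/r_{kj_{k}}.
\]
The decoupling of the unstable from the stable variables here is exactly what the definition of covering relation requires.

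With $f_{c}$ in this product form I would verify $M_{\mathbf i}\stackrel{\Phi}{\Longrightarrow}M_{\mathbf j}$ for every pair with $\prod_{k}w_{ki_{k}j_{k}}=1$, using the homotopy $h(t,x,y)=(P(x),(1-t)Q(y))$ and $\varphi=P$. The stable requirements follow from the second inequality of (\ref{eq: thm2}): it forces $|Q_{k}(y)|<1$ for all $k$ and all $y\in\overline{B^{ds}}$, so $(1-t)Q(y)$ stays in the interior of $B^{ds}$ throughout and never meets the stable boundary. For the unstable side I would split $P_{k}=\tilde P_{k}+R_{k}$, where $\tilde P_{k}(x)=a_{k\tau(k)}U_{\tau(k)i_{\tau(k)}}(x_{\tau(k)})-p_{kj_{k}}^{u}$ keeps only the dominant permuted-diagonal term and $R_{k}(x)=\sum_{l\neq\tau(k)}a_{kl}U_{li_{l}}(x_{l})$ collects the coupling cross-terms, with $|R_{k}(x)|\le\sum_{l\neq\tau(k)}\|a_{kl}U_{li_{l}}\|_{\max}$ on $\overline{B^{du}}$. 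Given $x\in\partial B^{du}$ there is $k_{0}$ with $|x_{k_{0}}|=1$; setting $k_{1}=\tau^{-1}(k_{0})$, the first inequality of (\ref{eq: thm2}) with $k=k_{1}$ gives $|\tilde P_{k_{1}}(x)|\ge\|a_{k_{1}k_{0}}U_{k_{0}i_{k_{0}}}-p_{k_{1}j_{k_{1}}}^{u}\|_{\min}>1+|R_{k_{1}}(x)|$, whence $|P(x)|\ge|P_{k_{1}}(x)|>1$. This yields $\varphi(\partial B^{du})\subset\mathbb R^{du}\setminus\overline{B^{du}}$ and keeps the image of the unstable boundary outside the target h-set. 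The same estimate shows the straight-line homotopy $\tilde P+tR$ avoids the origin on $\partial B^{du}$, so $\deg(P,B^{du},0)=\deg(\tilde P,B^{du},0)$ by homotopy invariance; and since $\tilde P$ is, up to the coordinate permutation induced by $\tau$, the Cartesian product of the one-block maps $x\mapsto a_{k\tau(k)}U_{\tau(k)i_{\tau(k)}}(x)-p_{kj_{k}}^{u}$, each of nonzero degree by $p_{kj_{k}}^{u}\in a_{k\tau(k)}U_{\tau(k)i_{\tau(k)}}(B^{u})$ and (\ref{eq: UVnorm2}), the product formula for the Brouwer degree makes this degree nonzero.

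Finally, every inequality used above is strict and is tested on the compact sets $\overline{B^{du}},\overline{B^{ds}}$, while the charts, the $U_{ki},V_{ki}$ and $A_{c}$ are continuous; the unified structure is precisely what makes $(\prod_{k}\hat c_{N_{k}})\circ A\circ(\prod_{k}\hat c_{N_{k}})^{-1}$ well defined independently of $\mathbf i,\mathbf j$, as noted after (\ref{eq: Ac2}), so the same expression for a perturbation $\tilde A$ close to $A$ remains uniformly close to $A_{c}$ on the relevant compact set. Hence for $(\tilde G,\{\tilde T_{k}\},\tilde A)$ sufficiently close to $(G,\{T_{k}\},A)$ the perturbed blocks $\tilde P,\tilde Q$ are uniformly close to $P,Q$, all strict inequalities persist, and the degree is unchanged by homotopy invariance; thus $\tilde\Phi$ has covering relations determined by $\bigotimes_{k}W_{k}$, which completes the argument. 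I expect the principal difficulty to be the unstable block: isolating the permuted-diagonal expansion from the coupling cross-terms and evaluating the Brouwer degree of the induced map $\mathbb R^{du}\to\mathbb R^{du}$ as a nonzero product of block degrees. The permutation $\tau$, the block-maximum norm, and the domination estimate (\ref{eq: thm2}) are exactly the devices that make this work.
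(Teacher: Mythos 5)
Your proposal is correct and follows essentially the same route as the paper: product h-sets in the block-maximum norm, the explicit chart form $(P,Q)$ obtained from (\ref{eq: UV2})--(\ref{eq: Ac2}), the permuted-diagonal domination estimate from (\ref{eq: thm2}) for the exit and entry conditions, a nonzero Brouwer degree as a product of block degrees, and then persistence of covering relations plus the semi-conjugacy to $\sigma _{W}^{+}$ for the entropy bound. The only cosmetic difference is that the paper absorbs your homotopy $\tilde{P}+tR$ into a second stage of the covering homotopy, so that its $\varphi$ is the diagonal-only map and its degree is evaluated through the factorization $g\circ ([b_{lk}]\bigotimes I_{u})\circ \prod_{k}U_{ki_{k}}$ via Proposition \ref{prop: degree}, whereas you keep $\varphi =P$ and transfer to the diagonal map by homotopy invariance of the degree.
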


\begin{remark}
Let $\{A^{\varepsilon }\}$ and $\{T^{\varepsilon }\}$ be one-parameter
families of maps on $\mathbb{R}^{(u+s)d}$, where $\varepsilon \in \mathbb{R}$
is a parameter, such that $A^{0}=A,T^{0}=\prod_{k\in \Omega }T_{k}$, and $%
A^{\varepsilon }(z)$ and $T^{\varepsilon }(z)$ are both continuous jointly
in $\varepsilon $ and $z$, then Theorem \ref{thm: main} holds for $%
A^{\varepsilon }\circ T^{\varepsilon }$ if $\varepsilon $ is sufficiently
small.
\end{remark}

\section{Proofs of Theorems \protect \ref{thm: main-periodic point} and 
\protect \ref{thm: main}}

First, we list some known results \cite{LL11, ZG04} which will be needed in
the proofs. The following one ensures persistence of covering relations for $%
C^{0}$ perturbations.

\begin{proposition}
\cite[Proposition 14]{LL11}\label{prop: persistence} Let $M$ and $N$ be
h-sets in $\mathbb{R}^{m}$ with $u(M)=u(N)=u$ and $s(M)=s(N)=s$ and let $%
f,g:M\rightarrow \mathbb{R}^{m}$ be continuous. Assume that $M\overset{f}{%
\Longrightarrow }N.$ Then there exists $\delta >0$ such that if $%
|f(x)-g(x)|<\delta $ for all $x\in M$ then $M\overset{g}{\Longrightarrow }N.$
\end{proposition}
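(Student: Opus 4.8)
The plan is to reuse the homotopy that already witnesses $M\overset{f}{\Longrightarrow}N$ and to prepend to it a short straight-line interpolation from $g$ to $f$, then check that this prepended piece does not destroy the two disjointness conditions of the covering relation. Throughout, write $f_{c}=c_{N}\circ f\circ c_{M}^{-1}$ and $g_{c}=c_{N}\circ g\circ c_{M}^{-1}$ for the maps read in charts, and let $h:[0,1]\times M_{c}\to\mathbb{R}^{u}\times\mathbb{R}^{s}$ be the homotopy supplied by $M\overset{f}{\Longrightarrow}N$, together with its terminal map $\varphi:\mathbb{R}^{u}\to\mathbb{R}^{u}$ and nonzero degree $w=\deg(\varphi,B^{u},0)$.

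First I would record two positive separation constants. Since $f_{c}=h(0,\cdot)$, the homotopy hypotheses at parameter $0$ give $f_{c}(M_{c}^{-})\cap N_{c}=\emptyset$ and $f_{c}(M_{c})\cap N_{c}^{+}=\emptyset$. As $M_{c},M_{c}^{-},N_{c},N_{c}^{+}$ are compact and $f_{c}$ is continuous, the images $f_{c}(M_{c}^{-})$ and $f_{c}(M_{c})$ are compact, so
$$\eta_{1}=\mathrm{dist}(f_{c}(M_{c}^{-}),N_{c})>0\quad\text{and}\quad\eta_{2}=\mathrm{dist}(f_{c}(M_{c}),N_{c}^{+})>0.$$
Set $\eta=\tfrac{1}{2}\min(\eta_{1},\eta_{2})$, so $\eta<\eta_{1}$ and $\eta<\eta_{2}$.

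Next I would convert $C^{0}$-closeness of $f$ and $g$ on $M$ into $C^{0}$-closeness of $f_{c}$ and $g_{c}$ on $M_{c}$. Because $c_{M}^{-1}(M_{c})=M$ is compact, $f(M)$ is compact, and $c_{N}$ is a homeomorphism of $\mathbb{R}^{m}$ and hence uniformly continuous on a fixed compact neighborhood of $f(M)$; therefore there is $\delta>0$ such that $|f(y)-g(y)|<\delta$ for all $y\in M$ keeps $g(y)$ in that neighborhood and forces $|c_{N}(f(y))-c_{N}(g(y))|<\eta$. Writing $y=c_{M}^{-1}(x)$, this says $|f_{c}(x)-g_{c}(x)|<\eta$ for every $x\in M_{c}$. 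This step — transporting the perturbation size through the merely continuous (not Lipschitz) charts — is the only place real care is needed, and it is exactly where compactness of $M$ and the homeomorphism property of $c_{N}$ enter; everything else is bookkeeping.

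Finally I would assemble the new homotopy $H:[0,1]\times M_{c}\to\mathbb{R}^{u}\times\mathbb{R}^{s}$ by
$$H(t,x)=\begin{cases}(1-2t)\,g_{c}(x)+2t\,f_{c}(x), & 0\le t\le\tfrac{1}{2},\\[2pt] h(2t-1,x), & \tfrac{1}{2}\le t\le1.\end{cases}$$
This is continuous (both pieces equal $f_{c}(x)$ at $t=\tfrac{1}{2}$), satisfies $H(0,\cdot)=g_{c}$, and has $H(1,\cdot)=h(1,\cdot)$, so the remaining two conditions of the covering relation — that the time-one map factors as $(\varphi,0)$ with $\varphi(\partial B^{u})\subset\mathbb{R}^{u}\setminus\overline{B^{u}}$, and that $\deg(\varphi,B^{u},0)=w\neq0$ — are inherited verbatim from $h$. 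It remains to verify the two disjointness conditions for $H$. On $[\tfrac{1}{2},1]$ they hold because that portion is $h$ itself. On $[0,\tfrac{1}{2}]$, every value $H(t,x)$ lies on the segment from $g_{c}(x)$ to $f_{c}(x)$, hence within distance $|f_{c}(x)-g_{c}(x)|<\eta$ of $f_{c}(x)$; by the triangle inequality for distance to a set, for $x\in M_{c}^{-}$ this gives $\mathrm{dist}(H(t,x),N_{c})\ge\eta_{1}-\eta>0$, and for $x\in M_{c}$ it gives $\mathrm{dist}(H(t,x),N_{c}^{+})\ge\eta_{2}-\eta>0$. Thus $H([0,1],M_{c}^{-})\cap N_{c}=\emptyset$ and $H([0,1],M_{c})\cap N_{c}^{+}=\emptyset$, so $H$ witnesses $M\overset{g}{\Longrightarrow}N$, completing the argument.
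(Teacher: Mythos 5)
Your proof is correct, and there is nothing to compare it against inside this paper: the statement is quoted as \cite[Proposition 14]{LL11} and used without proof, and your argument --- extracting the separation constants $\eta_{1},\eta_{2}>0$ by compactness of $f_{c}(M_{c}^{-}),f_{c}(M_{c}),N_{c},N_{c}^{+}$, transporting $C^{0}$-closeness of $f,g$ on $M$ into closeness of $f_{c},g_{c}$ on $M_{c}$ via uniform continuity of $c_{N}$ on a compact neighborhood of $f(M)$ (the one genuinely delicate point, which you handle correctly so that $\delta$ depends only on $f$), and prepending the straight-line homotopy from $g_{c}$ to $f_{c}$ to the given homotopy $h$ --- is exactly the standard persistence argument for covering relations. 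Since $H(1,\cdot)=h(1,\cdot)$, the terminal map $\varphi$ and its nonzero degree $w=\deg(\varphi,B^{u},0)$ are inherited verbatim, so the verification is complete as written.
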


The next statement says that a closed loop of covering relations implies
existence of a periodic point.

\begin{proposition}
\cite[Theorem 9]{ZG04}\label{prop: closed loop} Let $\{f_{i}\}_{i=1}^{k}$ be
a collection of continuous maps on $\mathbb{R}^{m}$ and $\{M_{i}\}_{i=1}^{k}$
be a collection of h-sets in $\mathbb{R}^{m}$ such that $M_{k+1}=M_{1}$ and $%
M_{i}\overset{f_{i}}{\Longrightarrow }M_{i+1}$ for $1\leq i\leq k.$ Then
there exists a point $x\in \mathtt{int}(M_{1})$ such that%
\begin{align*}
f_{i}\circ f_{i-1}\circ \cdots \circ f_{1}(x)& \in \mathtt{int}(M_{i+1})%
\text{ for }i=1,...k,\text{ and} \\
f_{k}\circ f_{k-1}\circ \cdots \circ f_{1}(x)& =x.
\end{align*}
\end{proposition}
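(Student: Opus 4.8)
I propose to realize the entire periodic itinerary as a single zero of one map defined on the \emph{product} of all the h-sets, so that membership in each $\mathtt{int}(M_i)$ falls out from the interior of the product. Since a covering relation $M_i\overset{f_i}{\Longrightarrow}M_{i+1}$ requires $u(M_i)=u(M_{i+1})$ and $s(M_i)=s(M_{i+1})$, going around the loop forces a common pair $(u,s)$ with $u+s=m$. Passing to chart coordinates, write $(f_i)_c=c_{M_{i+1}}\circ f_i\circ c_{M_i}^{-1}$ on $(M_i)_c=\overline{B^u}\times\overline{B^s}$, and let $h_i$ be the homotopy supplied by the covering relation (see Appendix), so that $h_i(0,\cdot)=(f_i)_c$ and $h_i(1,p,q)=(\varphi_i(p),0)$ with $\varphi_i(\partial B^u)\subset\mathbb{R}^u\setminus\overline{B^u}$ and $\deg(\varphi_i,B^u,0)=w_i\neq0$. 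On $\widehat M=\prod_{i=1}^k(M_i)_c\subset\mathbb{R}^{mk}$, writing a point as $(p_i,q_i)_{i=1}^k$ and taking indices mod $k$ (so $(p_{k+1},q_{k+1})=(p_1,q_1)$), I would define a map $\Theta_t$ whose $i$th block is
\[
\big(p_{i+1}-\pi_u h_i(t,p_i,q_i),\ \pi_s h_i(t,p_i,q_i)-q_{i+1}\big),
\]
where $\pi_u,\pi_s$ project onto $\mathbb{R}^u,\mathbb{R}^s$. A zero of $\Theta_0$ in $\mathtt{int}(\widehat M)$ is exactly a sequence $x_i=c_{M_i}^{-1}(p_i,q_i)\in\mathtt{int}(M_i)$ with $x_{i+1}=f_i(x_i)$ for all $i$ mod $k$; setting $x=x_1$ then gives the asserted point, since $f_i\circ\cdots\circ f_1(x)=x_{i+1}$ and $x_{k+1}=x$. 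Note the deliberate sign twist: the unstable slot is ``target minus image'', the stable slot ``image minus target''.

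The crux is to show $\Theta_t$ has no zero on $\partial\widehat M$ for any $t\in[0,1]$. Any such zero satisfies $h_i(t,p_i,q_i)=(p_{i+1},q_{i+1})$ for every $i$, and on $\partial\widehat M$ some factor lies in $(M_{i_0})_c^-\cup(M_{i_0})_c^+$. If $p_{i_0}\in\partial B^u$ I invoke the relation \emph{out of} $M_{i_0}$, whose exit condition $h_{i_0}([0,1],(M_{i_0})_c^-)\cap(M_{i_0+1})_c=\emptyset$ contradicts $h_{i_0}(t,p_{i_0},q_{i_0})=(p_{i_0+1},q_{i_0+1})\in(M_{i_0+1})_c$. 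If $q_{i_0}\in\partial B^s$ I instead invoke the relation \emph{into} $M_{i_0}$, whose condition $h_{i_0-1}([0,1],(M_{i_0-1})_c)\cap(M_{i_0})_c^+=\emptyset$ contradicts $h_{i_0-1}(t,p_{i_0-1},q_{i_0-1})=(p_{i_0},q_{i_0})\in(M_{i_0})_c^+$. Hence $\deg(\Theta_0,\mathtt{int}(\widehat M),0)=\deg(\Theta_1,\mathtt{int}(\widehat M),0)$ by homotopy invariance.

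At $t=1$ the map decouples: its $i$th block is $(p_{i+1}-\varphi_i(p_i),\,-q_{i+1})$, so after reordering coordinates $\Theta_1=\Theta_1^p\times\Theta_1^q$ on $(\prod_i\overline{B^u})\times(\prod_i\overline{B^s})$. The stable part $\Theta_1^q\colon(q_i)_i\mapsto(-q_{i+1})_i$ is a linear isomorphism (a block cyclic shift composed with $-\mathrm{id}$), so $\deg(\Theta_1^q)=\pm1$. For the unstable loop $\Theta_1^p\colon(p_i)_i\mapsto(p_{i+1}-\varphi_i(p_i))_i$, the homotopy $G_\lambda\colon(p_i)_i\mapsto(\lambda p_{i+1}-\varphi_i(p_i))_i$, $\lambda\in[0,1]$, has no boundary zero, because $|p_{i_0}|=1$ forces $|\varphi_{i_0}(p_{i_0})|>1$ while $|\lambda p_{i_0+1}|\le1$, contradicting $\varphi_{i_0}(p_{i_0})=\lambda p_{i_0+1}$; at $\lambda=0$ it is the product map $(-\varphi_i(p_i))_i$, so $\deg(\Theta_1^p)=\prod_i\deg(-\varphi_i,B^u,0)=\prod_i(-1)^u w_i\neq0$. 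By the product formula $\deg(\Theta_1,\mathtt{int}(\widehat M),0)=\pm(-1)^{ku}\prod_i w_i\neq0$, whence $\Theta_0$ has a zero in $\mathtt{int}(\widehat M)$, finishing the proof.

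The main obstacle is the boundary analysis of the second step: one must pair each boundary coordinate with the correct neighbouring covering relation — the \emph{outgoing} relation (exit condition) for a point on the unstable boundary, the \emph{incoming} relation (stable condition) for a point on the stable boundary — since the opposite pairing yields no contradiction. Choosing the sign twist in $\Theta_t$ so that it matches exactly these two conditions, and so that the decoupled degree is a visibly nonzero product, is the delicate bookkeeping; once it is in place the degree computations are elementary homotopies. An alternative would be to first show covering relations compose (concatenating the $h_i$, multiplying the degrees) to get $M_1\overset{f_k\circ\cdots\circ f_1}{\Longrightarrow}M_1$ and then apply a single self-covering fixed-point theorem, but recovering the intermediate interior memberships would then need an extra argument that the product formulation provides for free.
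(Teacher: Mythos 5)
Your argument is correct and complete: the product-space map $\Theta_t$ whose zeros encode the whole itinerary, the boundary exclusion pairing the unstable boundary with the exit condition (\ref{eq:h6}) of the outgoing relation and the stable boundary with condition (\ref{eq:h7}) of the incoming relation, and the degree computation $\pm(-1)^{ku}\prod_i w_i\neq 0$ at $t=1$ all go through. The paper itself gives no proof of this proposition --- it is quoted verbatim from \cite[Theorem 9]{ZG04} --- and your construction is essentially the original argument of Zgliczy\'nski and Gidea, so there is nothing further to compare.
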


It is known that a\ continuous map having covering relations determined by a
transition matrix is topologically semi-conjugate to a one-sided subshift of
finite type.

\begin{proposition}
\cite[Proposition 15]{LL11}\label{prop: semi-conjugacy} Let $f:\mathbb{R}%
^{m}\rightarrow \mathbb{R}^{m}$ be a continuous map which has covering
relations determined by a transition matrix $W$. Then there exists a compact
subset $\Lambda $\ of $\mathbb{R}^{m}$\ such that $\Lambda $\ is maximal
positively invariant for $f$\ in the union of the h-sets $($with respect to $%
W)$ and $f|\Lambda $\ is topologically semi-conjugate to $\sigma _{W}^{+}$.
\end{proposition}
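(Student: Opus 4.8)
The plan is to realize the semi-conjugacy through the natural itinerary (coding) map and to produce $\Lambda$ as the maximal forward-invariant set inside the union of the h-sets. First I would set $\mathcal{M}=\bigcup_{i=1}^{\gamma}M_{i}$, which is compact because it is a finite union of compact h-sets, and I would define
\[
\Lambda=\{x\in \mathcal{M}:f^{n}(x)\in \mathcal{M}\text{ for all }n\geq 0\}=\mathcal{M}\cap \bigcap_{n\geq 1}f^{-n}(\mathcal{M}).
\]
Since $f$ is continuous and $\mathcal{M}$ is closed, each $f^{-n}(\mathcal{M})$ is closed, so $\Lambda$ is a closed subset of the compact set $\mathcal{M}$ and hence compact; it is positively invariant and maximal with this property directly from its definition. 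Because the h-sets are pairwise disjoint and compact, the gap $\delta_{0}=\min_{i\neq j}\mathrm{dist}(M_{i},M_{j})$ is positive, so for every $x\in \Lambda$ and every $n\geq 0$ the point $f^{n}(x)$ lies in exactly one $M_{a_{n}(x)}$; this lets me define the coding map $\pi(x)=(a_{n}(x))_{n\geq 0}$.

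Next I would check that $\pi$ takes values in $\Sigma_{W}^{+}$ and is an equivariant continuous surjection. Admissibility of $\pi(x)$ comes from the transition structure built into covering relations determined by $W$ (see the Appendix): when $w_{ij}=0$ one has $f(M_{i})\cap M_{j}=\emptyset$, so the consecutive symbols $a_{n}(x),a_{n+1}(x)$ of a point whose orbit never leaves $\mathcal{M}$ must satisfy $w_{a_{n}a_{n+1}}=1$; the same fact shows that $\Lambda$ is exactly the set of points with admissible itineraries. Equivariance $\pi \circ f=\sigma_{W}^{+}\circ \pi$ is immediate since $f^{n}(f(x))=f^{n+1}(x)$. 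For continuity I would use uniform continuity of the finitely many iterates $f^{0},\ldots ,f^{n}$ on $\mathcal{M}$ together with the gap $\delta_{0}$: if $f^{k}(x)$ and $f^{k}(y)$ lie in different h-sets then $|f^{k}(x)-f^{k}(y)|\geq \delta_{0}$, so choosing $|x-y|$ small enough to keep all iterates up to order $n$ within $\delta_{0}$ forces $\pi(x)$ and $\pi(y)$ to agree on their first $n+1$ symbols, which is precisely continuity in the product metric on $\Sigma_{W}^{+}$.

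The main work, and the step I expect to be the principal obstacle, is surjectivity: every admissible one-sided sequence must be realized by a point of $\Lambda$. For an admissible finite word $a_{0}a_{1}\cdots a_{n}$ I would introduce the realization set $K_{a_{0}\cdots a_{n}}=\{x\in M_{a_{0}}:f^{k}(x)\in M_{a_{k}},\ 1\leq k\leq n\}$ and argue it is nonempty and compact. Nonemptiness is exactly where the topological content of covering relations enters: along the admissible chain $M_{a_{0}}\overset{f}{\Longrightarrow}M_{a_{1}}\overset{f}{\Longrightarrow}\cdots \overset{f}{\Longrightarrow}M_{a_{n}}$, the local-degree and homotopy conditions in the definition of a covering relation compose, so the same Brouwer-degree argument that underlies Proposition \ref{prop: closed loop} yields a point of $M_{a_{0}}$ whose first $n$ iterates follow the prescribed h-sets. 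Compactness of $K_{a_{0}\cdots a_{n}}$ follows from continuity of $f$ and closedness of the h-sets, and these sets are nested as the word is extended; by the finite-intersection property of the compact set $M_{a_{0}}$, the intersection $\bigcap_{n\geq 0}K_{a_{0}\cdots a_{n}}$ is nonempty, and any point $x$ in it satisfies $f^{n}(x)\in M_{a_{n}}$ for all $n$, hence $x\in \Lambda$ and $\pi(x)=(a_{n})_{n\geq 0}$. The delicate part is establishing the nonemptiness of each finite realization set uniformly, i.e.\ that covering relations genuinely compose and retain a nonzero degree through the chain; once that is in hand, the passage from finite prefixes to the full infinite itinerary is a routine compactness argument.
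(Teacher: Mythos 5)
The paper offers no internal proof of this proposition (it is imported verbatim from \cite[Proposition 15]{LL11}), so your argument has to be measured against the definition of \emph{covering relations determined by} $W$ actually in force here, namely Definition \ref{def:subshift and covering relation} in the Appendix. That definition requires only that $w_{ij}=1$ implies $M_{i}\overset{f}{\Longrightarrow}M_{j}$; it imposes \emph{no condition whatsoever} when $w_{ij}=0$. Your admissibility step --- ``when $w_{ij}=0$ one has $f(M_{i})\cap M_{j}=\emptyset$'' --- therefore invokes a hypothesis that is not available; it is the defining property of a strictly stronger notion (``strong'' covering relations), not the one stated in this paper. As a result, with your choice $\Lambda=\mathcal{M}\cap\bigcap_{n\geq 1}f^{-n}(\mathcal{M})$ (the genuinely maximal positively invariant set in $\mathcal{M}$), the itinerary map $\pi$ need not take values in $\Sigma_{W}^{+}$ at all, and your claim that $\Lambda$ is exactly the set of points with admissible itineraries fails. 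A concrete one-dimensional example: let $u=1$, $s=0$, $M_{1}=[0,1]$, $M_{2}=[2,3]$, $W=\left(\begin{smallmatrix}1&1\\1&0\end{smallmatrix}\right)$, with $f(x)=5x-1$ on $M_{1}$ and $f(x)=13-5x$ on $M_{2}$ (extended continuously). All three covering relations demanded by $W$ hold ($f(0)=-1$, $f(1)=4$, $f(2)=3$, $f(3)=-2$), yet $f$ has a fixed point $13/6\in M_{2}$, so the maximal invariant set contains a point with itinerary $222\cdots\notin\Sigma_{W}^{+}$. So the step is not a presentational shortcut but a real gap: the semi-conjugacy cannot be the itinerary map on \emph{that} $\Lambda$.

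The repair --- and, in my reading, the meaning of the parenthetical ``(with respect to $W$)'' in the statement --- is to build $\Lambda$ from the admissible itineraries rather than from mere confinement to $\mathcal{M}$: set $\Lambda=\bigcap_{n\geq 0}\Lambda_{n}$, where $\Lambda_{n}$ is the union of your realization sets $K_{a_{0}\cdots a_{n}}$ over all $W$-admissible words $a_{0}\cdots a_{n}$. Each $\Lambda_{n}$ is a finite union of compact sets and the family is nested, so $\Lambda$ is compact; it is positively invariant because the shift of an admissible itinerary is admissible, and it is maximal among positively invariant sets whose orbits make only $W$-allowed transitions. On this $\Lambda$ everything else in your proposal goes through essentially verbatim: $\pi$ is well defined into $\Sigma_{W}^{+}$ by disjointness of the h-sets, continuous by your gap-$\delta_{0}$ and uniform-continuity argument, equivariant trivially, and surjective by exactly your nested-compacta argument --- the nonemptiness of each $K_{a_{0}\cdots a_{n}}$ does follow from composing covering relations along an admissible chain, which is the chain version of \cite[Theorem 9]{ZG04} underlying Proposition \ref{prop: closed loop} (the product of the nonzero local Brouwer degrees along the chain is nonzero), so this part of your plan is sound. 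Note finally that this corrected, weaker form is all that the paper uses: in the proof of Theorem \ref{thm: main} one only needs $h_{\mathtt{top}}(\tilde{A}\circ\tilde{T})\geq h_{\mathtt{top}}(\sigma_{W}^{+})$, which follows from semi-conjugacy on any compact positively invariant subset.
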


Finally, we summarize basic properties of the local Brouwer degree; refer to 
\cite[Chapter III]{Sch69} for the proof.

\begin{proposition}
\label{prop: degree} Let $S$ be an open and bounded subset of $\mathbb{R}%
^{m} $ with $m\geq 1$, and let $\varphi :\bar{S}\rightarrow \mathbb{R}^{m}$
be continuous and $q\in \mathbb{R}^{m}$ such that $q\notin \varphi (\partial
S)$. Then the following holds:\ 

\begin{enumerate}
\item \label{item-det-degree}If $\varphi $ is $C^{1}$ and for each $x\in
\varphi ^{-1}(q)\cap S$ the Jacobian matrix of $\varphi $ at $x$, denoted by 
$D\varphi _{x}$, is nonsingular, then 
\begin{equation*}
\deg (\varphi ,S,q)=\sum_{x\in \varphi ^{-1}(q)\cap S}\mathtt{sgn}(\det
D\varphi _{x}),
\end{equation*}%
where $\mathtt{sgn}$ is the sign function.

\item \label{item-multiplication-degree}Let $\psi :\mathbb{R}^{m}\rightarrow 
\mathbb{R}^{m}$ be a $C^{1}$ map and $p\in \mathbb{R}^{m}$ such that $\psi
^{-1}(p)$ consists of a single point and lies in a bounded connected
component $\Delta $ of $\mathbb{R}^{m}\setminus \varphi (\partial S)$, and $%
D\psi _{\psi ^{-1}(p)}$ is nonsingular. Then 
\begin{equation*}
\deg (\psi \circ \varphi ,S,p)=\mathtt{sgn}(\det D\psi _{\psi ^{-1}(p)})\deg
(\varphi ,S,v),
\end{equation*}%
for any $v\in \Delta $.

\item \label{item-product-degree}Let $S^{\prime }$ be an open and bounded
subset of $\mathbb{R}^{n}$ with $n\geq 1$, and let $\psi :\bar{S}^{\prime
}\rightarrow \mathbb{R}^{n}$ be continuous and $q^{\prime }\in \mathbb{R}%
^{n} $ such that $q^{\prime }\notin \psi (\partial S^{\prime })$. Define a
map $(\varphi ,\psi ):\mathbb{R}^{m}\times \mathbb{R}^{n}\rightarrow \mathbb{%
R}^{m}\times \mathbb{R}^{n}$ by $(\varphi ,\psi )(x,y)=(\varphi (x),\psi
(y)) $ for $x\in \mathbb{R}^{m}$ and $y\in \mathbb{R}^{n}$. Then 
\begin{equation*}
\deg ((\varphi ,\psi ),S\times S^{\prime },(q,q^{\prime }))=\deg (\varphi
,S,q)\deg (\psi ,S^{\prime },q^{\prime }).
\end{equation*}
\end{enumerate}
\end{proposition}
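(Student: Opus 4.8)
The plan is to ground all three properties in the analytic construction of the local Brouwer degree and then read them off from the structural theorems (change of variables, excision, homotopy invariance, and local constancy in the target point) that accompany that construction in \cite[Chapter III]{Sch69}. Concretely, for a $C^1$ map $\varphi:\bar S\to\mathbb{R}^m$ with $q\notin\varphi(\partial S)$ I would take as the working definition the regularized integral
\[
\deg(\varphi,S,q)=\int_S \det(D\varphi_x)\,\eta_\epsilon(\varphi(x)-q)\,dx,
\]
where $\eta_\epsilon$ is a smooth bump supported in $B^m(0,\epsilon)$ of total mass one and $\epsilon$ is smaller than $\operatorname{dist}(q,\varphi(\partial S))$; one checks this is independent of $\epsilon$ and of $\eta_\epsilon$, extends to all admissible $q$, and then extends to continuous $\varphi$ by uniform approximation. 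Parts~(\ref{item-det-degree}) and (\ref{item-product-degree}) fall out almost immediately from this formula, while part~(\ref{item-multiplication-degree}) is the substance.

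For part~(\ref{item-det-degree}), the hypothesis that $D\varphi_x$ is nonsingular at every $x\in\varphi^{-1}(q)\cap S$ makes $q$ a regular value; since $q\notin\varphi(\partial S)$ keeps preimages off the boundary, $\varphi^{-1}(q)\cap S$ is discrete in the compact $\bar S$, hence finite. Shrinking $\epsilon$ so that the support of $x\mapsto\eta_\epsilon(\varphi(x)-q)$ breaks into disjoint neighborhoods $U_j$ of the preimage points $x_j$, on each of which the inverse function theorem makes $\varphi$ a diffeomorphism, I would apply the change-of-variables formula to each $\int_{U_j}\det(D\varphi_x)\eta_\epsilon(\varphi(x)-q)\,dx$, obtaining $\operatorname{sgn}(\det D\varphi_{x_j})$; summing yields the determinant formula. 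For part~(\ref{item-product-degree}), approximation reduces to $C^1$ data and regular values; then $(\varphi,\psi)$ has block-diagonal Jacobian with $\det D(\varphi,\psi)_{(x,y)}=\det D\varphi_x\cdot\det D\psi_y$, and using a product bump $\eta_\epsilon(u)\eta'_\epsilon(v)$ Fubini splits the integral over $S\times S'$ into the product of the two single-factor integrals, giving $\deg(\varphi,S,q)\deg(\psi,S',q')$.

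For part~(\ref{item-multiplication-degree}) I would first record that $p\notin(\psi\circ\varphi)(\partial S)$: a boundary solution would force $\varphi(x)=x_0:=\psi^{-1}(p)\in\Delta$, impossible since $\Delta\subset\mathbb{R}^m\setminus\varphi(\partial S)$; thus the left-hand side is defined. Because the degree is locally constant on $\mathbb{R}^m\setminus\varphi(\partial S)$, the number $w:=\deg(\varphi,S,v)$ is one and the same for all $v\in\Delta$, so it suffices to prove the identity with this common $w$. I would invoke the Leray product formula
\[
\deg(\psi\circ\varphi,S,p)=\sum_j \deg(\varphi,S,\Delta_j)\,\deg(\psi,\Delta_j,p),
\]
the sum running over the bounded components $\Delta_j$ of $\mathbb{R}^m\setminus\varphi(\partial S)$. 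Since $\psi^{-1}(p)=\{x_0\}\subset\Delta$, every other component has $p\notin\psi(\partial\Delta_j)$ with no solution inside, so $\deg(\psi,\Delta_j,p)=0$ there, and the sum collapses to $w\cdot\deg(\psi,\Delta,p)$. Finally part~(\ref{item-det-degree}), applied to $\psi$ on $\Delta$ with the single nonsingular preimage $x_0$, evaluates $\deg(\psi,\Delta,p)=\operatorname{sgn}(\det D\psi_{x_0})$, which is exactly the asserted formula.

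The main obstacle is the Leray product formula itself. I would prove the case needed here by reducing $\psi$ to its linearization $L(z)=p+D\psi_{x_0}(z-x_0)$ at $x_0$: the straight-line homotopy
\[
\psi_t(z)=L(z)+t\bigl(\psi(z)-L(z)\bigr),
\]
confined by excision to a small ball $\Delta_0\ni x_0$ on whose complement $p$ is never attained, connects $\psi$ to $L$ without creating new solutions of $p$; composing this homotopy with $\varphi$ gives a homotopy of $\psi\circ\varphi$ that stays admissible at $p$ precisely because $\varphi(\partial S)$ avoids $\Delta$. For the linear map $L$ the identity $\deg(L\circ\varphi,S,p)=\operatorname{sgn}(\det L)\,\deg(\varphi,S,L^{-1}p)$ is a direct change of variables in the regularized integral for $L\circ\varphi$. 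The delicate point throughout is keeping every homotopy admissible, ensuring $p$ never meets the moving image of $\partial S$, and this is exactly where the two standing hypotheses, that $\psi^{-1}(p)$ is a single point and that it lies in a component of $\mathbb{R}^m\setminus\varphi(\partial S)$, are used.
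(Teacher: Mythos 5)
The paper itself offers no proof of this proposition: it is stated as a summary of standard facts with the proof deferred to \cite[Chapter III]{Sch69}, so the comparison here is with the standard literature rather than with an argument in the text. Your treatment of items (\ref{item-det-degree}) and (\ref{item-product-degree}) via the regularized-integral definition is correct and standard, and your reduction of item (\ref{item-multiplication-degree}) to the Leray multiplication formula is also sound: $p\notin(\psi\circ\varphi)(\partial S)$, the degree $\deg(\psi,\Delta_{j},p)$ is defined for every bounded component because $\partial\Delta_{j}\subset\varphi(\partial S)$ while $\psi^{-1}(p)\subset\Delta$, it vanishes on every component other than $\Delta$ by the solution property, and item (\ref{item-det-degree}) applied to $\psi$ on $\Delta$ evaluates the surviving factor as $\mathrm{sgn}(\det D\psi_{\psi^{-1}(p)})$. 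Had you stopped there and cited the multiplication theorem, as the paper in effect does, the argument would be complete.

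The genuine gap is in your proposed proof of the multiplication formula itself. Homotopy invariance of $\deg(\psi_{t}\circ\varphi,S,p)$ requires $p\notin\psi_{t}(\varphi(\partial S))$ for \emph{every} $t\in[0,1]$, but the hypotheses only control $\psi_{0}^{-1}(p)$ and $\psi_{1}^{-1}(p)$; the intermediate maps $\psi_{t}=L+t(\psi-L)$ can acquire preimages of $p$ far from $x_{0}$. Concretely, in $\mathbb{R}^{2}\cong\mathbb{C}$ take $p=0$ and $\psi(re^{i\phi})=re^{i(\phi+\theta(r))}$ with $\theta$ smooth, $\theta\equiv 0$ near $0$ and $\theta(2)=\pi$: then $\psi^{-1}(0)=\{0\}$, $D\psi_{0}=I$, $L=\mathrm{id}$, yet $\psi_{1/2}(y)=\tfrac{1}{2}y+\tfrac{1}{2}(-y)=0$ for every $y$ with $|y|=2$. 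With $\varphi=\mathrm{id}$ and $S=B^{2}(0,2)$ this is exactly the setting of item (\ref{item-multiplication-degree}), and the straight-line homotopy sends all of $\varphi(\partial S)$ to $p$ at $t=\tfrac{1}{2}$. Excision to a small ball $\Delta_{0}\ni x_{0}$ does not repair this, because for the composite the excision would have to be carried out in the domain $S$, precisely on the set where you have not yet shown $\psi_{t}\circ\varphi\neq p$. The standard proofs avoid the difficulty by approximating with smooth maps having regular values and counting preimages component by component (this is how \cite[Chapter III]{Sch69} proceeds), or by first homotoping $\psi$, rel a neighborhood of $x_{0}$, to a map that omits $p$ outside that neighborhood in a controlled way. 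Either repair, or an outright citation of the multiplication theorem, closes the gap; the rest of your argument is correct.
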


Now, we are in a position to prove our main results.

\begin{proof}[Proof of Theorem \protect \ref{thm: main-periodic point}]
Let $\prod_{k=1}^{d}w_{ki_{k}j_{k}}$ be a nonzero entry of $%
\bigotimes_{k=1}^{d}W_{k}$. We shall prove that the following covering
relation holds 
\begin{equation*}
\prod_{k=1}^{d}M_{ki_{k}}\overset{A\circ T}{\Longrightarrow }%
\prod_{k=1}^{d}M_{kj_{k}}.
\end{equation*}%
In the sequel, we use the following notations:$\ x_{k}\in \mathbb{R}^{u}$
and $y_{k}\in \mathbb{R}^{s}$ for $1\leq k\leq d$, $x=\prod_{k=1}^{d}x_{k}%
\in (\mathbb{R}^{u})^{d}=\mathbb{R}^{ud}$, $y=\prod_{k=1}^{d}y_{k}\in (%
\mathbb{R}^{s})^{d}=\mathbb{R}^{sd}$, $\prod_{k=1}^{d}(x_{k},y_{k})\in (%
\mathbb{R}^{u+s})^{d}=\mathbb{R}^{(u+s)d}$, and $(x,y)\in \mathbb{R}%
^{ud}\times \mathbb{R}^{sd}=\mathbb{R}^{(u+s)d}$.

First, we check conditions on h-sets. Let $M=\prod_{k=1}^{d}M_{ki_{k}}$ and $%
N=\prod_{k=1}^{d}M_{kj_{k}}$.\ Then $M$ and\ $N$ are h-sets, with constants $%
u(M)=u(N)=ud$ and $s(M)=s(N)=sd,$ and homeomorphisms $c_{M},c_{N}:\mathbb{R}%
^{(u+s)d}\rightarrow \mathbb{R}^{ud}\times \mathbb{R}^{sd}$, defined as
follows 
\begin{equation*}
c_{M}(\prod_{k=1}^{d}(x_{k},y_{k}))=\varsigma \circ
\prod_{k=1}^{d}c_{M_{ki_{k}}}(x_{k},y_{k}),\text{ and }c_{N}(%
\prod_{k=1}^{d}(x_{k},y_{k}))=\varsigma \circ
\prod_{k=1}^{d}c_{M_{kj_{k}}}(x_{k},y_{k}),
\end{equation*}%
where $\varsigma :\mathbb{R}^{(u+s)d}\rightarrow \mathbb{R}^{(u+s)d}$ is
defined by $\varsigma (\prod_{k=1}^{d}(x_{k},y_{k}))=(x,y)$.

Second, we construct a homotopy such that (\ref{eq:h5})-(\ref{eq:h7}) holds.
Define a homotopy $H:[0,1]\times \overline{B^{ud}}\times \overline{B^{sd}}%
\rightarrow \mathbb{R}^{(u+s)d}$ as 
\begin{equation*}
H(t,x,y)=(1-t)c_{N}\circ A\circ T\circ c_{M}^{-1}(x,y)+t\pi _{1}\circ
c_{N}\circ A\circ T\circ c_{M}^{-1}(x,y)
\end{equation*}%
where $\pi _{1}:\mathbb{R}^{(u+s)d}\rightarrow \mathbb{R}^{(u+s)d}$ is
defined by $\pi _{1}(x,y)=(x,0)$ for all $x\in \mathbb{R}^{ud}$ and $y\in 
\mathbb{R}^{sd}$. Clearly, (\ref{eq:h5}) holds.

Before checking (\ref{eq:h6}) and (\ref{eq:h7}) (see Appendix), we derive a
new form for the homotopy. Define $\bar{c}_{M}=\prod_{k=1}^{d}c_{M_{ki_{k}}}$
and $\bar{c}_{N}=\prod_{k=1}^{d}c_{M_{kj_{k}}}$. Then $c_{M}=\varsigma \circ 
\bar{c}_{M}$ and $c_{N}=\varsigma \circ \bar{c}_{N}$. Let $(x,y)\in 
\overline{B^{ud}}\times \overline{B^{sd}}=M_{c}$. Then $\bar{c}_{N}\circ
T\circ \bar{c}_{M}^{-1}(\prod_{k=1}^{d}(x_{k},y_{k}))\in \bar{c}_{N}(T(M))$
and $\bar{c}_{N}\circ T\circ \bar{c}_{M}^{-1}(\prod_{k=1}^{d}(x_{k},y_{k}))=%
\prod_{k=1}^{d}(U_{ki_{k}j_{k}}(x_{k}),V_{ki_{k}j_{k}}(y_{k}))$. Moreover,
by the definition of $A_{c}$, we obtain that 
\begin{eqnarray*}
&&c_{N}\circ A\circ T\circ c_{M}^{-1}(x,y) \\
&=&\varsigma \circ \bar{c}_{N}\circ A\circ T\circ \bar{c}_{M}^{-1}\circ
\varsigma ^{-1}(x,y)=\varsigma \circ \bar{c}_{N}\circ A\circ \bar{c}%
_{N}^{-1}\circ \bar{c}_{N}\circ T\circ \bar{c}_{M}^{-1}(%
\prod_{k=1}^{d}(x_{k},y_{k})) \\
&=&\varsigma \circ A_{c}\circ \bar{c}_{N}\circ T\circ \bar{c}%
_{M}^{-1}(\prod_{k=1}^{d}(x_{k},y_{k}))=\varsigma \circ A_{c}\circ \prod
\limits_{k=1}^{d}(U_{ki_{k}j_{k}}(x_{k}),V_{ki_{k}j_{k}}(y_{k})) \\
&=&\varsigma \circ ([a_{lk}]\bigotimes I)\circ \prod
\limits_{k=1}^{d}(U_{ki_{k}j_{k}}(x_{k}),V_{ki_{k}j_{k}}(y_{k}))=\varsigma
\circ \prod \limits_{l=1}^{d}(\sum
\limits_{k=1}^{d}a_{lk}U_{ki_{k}j_{k}}(x_{k}),\sum
\limits_{k=1}^{d}a_{lk}V_{ki_{k}j_{k}}(y_{k})) \\
&=&(\prod
\limits_{l=1}^{d}(\sum \limits_{k=1}^{d}a_{lk}U_{ki_{k}j_{k}}(x_{k})),\prod
\limits_{l=1}^{d}(\sum \limits_{k=1}^{d}a_{lk}V_{ki_{k}j_{k}}(y_{k}))).
\end{eqnarray*}%
Therefore, 
\begin{equation*}
H(t,x,y)=(\prod_{l=1}^{d}(\sum_{k=1}^{d}a_{lk}U_{ki_{k}j_{k}}(x_{k})),%
\prod_{l=1}^{d}((1-t)\sum_{k=1}^{d}a_{lk}V_{ki_{k}j_{k}}(y_{k}))).
\end{equation*}

To prove (\ref{eq:h6}), consider $(x,y)\in M_{c}^{-}$. Then there exists $%
1\leq \beta \leq d$ such that $|x_{\beta }|=1$. Since $\tau $ is a
permutation one can find $\gamma ,1\leq \gamma \leq d$ such that $\tau
(\gamma )=\beta $. By (\ref{eq: thm1}), we have that 
\begin{equation*}
||a_{\gamma \beta }U_{\beta i_{\beta }j_{\beta }}||_{\min }-\sum_{k=1,k\neq
\beta }^{d}||a_{\gamma k}U_{ki_{k}j_{k}}||_{\max }>1\text{.}
\end{equation*}%
Hence, 
\begin{eqnarray}
|\sum_{k=1}^{d}a_{\gamma k}U_{ki_{k}j_{k}}(x_{k})| &\geq &|a_{\gamma \beta
}U_{\beta i_{\beta }j_{\beta }}(x_{\beta })|-\sum_{k=1,k\neq \beta
}^{d}|a_{\gamma k}U_{ki_{k}j_{k}}(x_{k})|  \notag \\
&\geq &||a_{\gamma \beta }U_{\beta i_{\beta }j_{\beta }}||_{\min
}-\sum_{k=1,k\neq \beta }^{d}||a_{\gamma k}U_{ki_{k}j_{k}}||_{\max }  \notag
\\
&>&1.  \label{eq:3}
\end{eqnarray}%
It implies that $H(t,x,y)\notin N_{c}$ and thus (\ref{eq:h6}) holds.

For checking (\ref{eq:h7}), consider $(x,y)\in M_{c}$. Then we get that 
\begin{eqnarray*}
|(1-t)\sum_{k=1}^{d}a_{lk}V_{ki_{k}j_{k}}(y_{k})| &\leq
&|\sum_{k=1}^{d}a_{lk}V_{ki_{k}j_{k}}(y_{k})| \\
&\leq &\sum_{k=1}^{d}||a_{lk}V_{ki_{k}j_{k}}||_{\max } \\
&<&1,
\end{eqnarray*}%
where the last inequality follows from (\ref{eq: thm1}). Therefore $%
H(t,x,y)\notin N_{c}^{+}$ and hence (\ref{eq:h7}) is true.

Next, we check the item 2 in Definition \ref{def: covering relation}.
Consider a map $\varphi :\mathbb{R}^{ud}\rightarrow \mathbb{R}^{ud}$, where 
\begin{equation*}
\varphi (x)=\prod_{l=1}^{d}(\sum_{k=1}^{d}a_{lk}U_{ki_{k}j_{k}}(x_{k})).
\end{equation*}%
Then $H(1,x,y)=(\prod_{l=1}^{d}(%
\sum_{k=1}^{d}a_{lk}U_{ki_{k}j_{k}}(x_{k})),0)=(\varphi (x),0)$. By (\ref%
{eq:3}), we have $\varphi (\partial B^{ud})\subset \mathbb{R}^{ud}\backslash 
\overline{B^{ud}}$.

Finally, we show that the local Brouwer degree\ $\deg (\varphi ,B^{ud},0)$
is nonzero. Observe that we can rewrite 
\begin{equation*}
\varphi (x)=([a_{lk}]\bigotimes I_{u})\circ \prod_{k=1}^{d}U_{ki_{k}j_{k}},
\end{equation*}%
where $I_{u}$\ is the $u\times u$ identity matrix. Since the matrix $%
[a_{lk}] $ is invertible, $[a_{lk}]\bigotimes I_{u}$ is also invertible.
Since $||U_{ki_{k}j_{k}}||_{\min }>1$ and $\deg
(U_{ki_{k}j_{k}},B^{u},0)\neq 0$ for all $1\leq k\leq d$, we have $0\in
U_{ki_{k}j_{k}}(B^{u})$, and hence $0=([a_{lk}]\bigotimes I_{u})^{-1}(0)$
lies in a bounded connected component of $\mathbb{R}^{ud}\backslash
(\prod_{k=1}^{d}U_{ki_{k}j_{k}})(\partial B^{ud})$. By Proposition \ref%
{prop: degree}, we obtain that 
\begin{eqnarray*}
\deg (\varphi ,B^{ud},0) &=&\mathtt{sgn}(\det ([a_{lk}]\bigotimes
I_{u}))\prod_{k=1}^{d}\deg (U_{ki_{k}j_{k}},B^{u},0) \\
&=&\mathtt{sgn}(\det ([a_{lk}]\bigotimes I_{u}))\prod_{k=1}^{d}\deg
(U_{ki_{k}j_{k}},B^{u},0) \\
&\neq &0.
\end{eqnarray*}

We have proved that the needed covering relation holds. If $\tilde{T}_{k}$
and $\tilde{A}$ are both $C^{0}$ close enough to $T_{k}$ and $A$
respectively. Then by Proposition \ref{prop: persistence}, the following
covering relation holds, for all nonzero entries $%
\prod_{k=1}^{d}w_{ki_{k}j_{k}}$ of $\bigotimes_{k=1}^{d}W_{k}$, 
\begin{equation*}
\prod_{k=1}^{d}M_{ki_{k}}\overset{\tilde{A}\circ \tilde{T}}{\Longrightarrow }%
\prod_{k=1}^{d}M_{kj_{k}}.
\end{equation*}%
Therefore, $(\tilde{G},\{ \tilde{T}_{k}\},\tilde{A})$ has covering relations
determined by $\bigotimes_{k=1}^{d}W_{k}$. Since each $W_{k}$ is a
permutation, there exists a closed loop of covering relations for $\tilde{A}%
\circ \tilde{T}$ with loop length $\mathtt{lc}$\texttt{$k$}$(\dim
W_{1},\ldots ,\dim W_{d})$. By Proposition \ref{prop: closed loop}, $\tilde{A%
}\circ \tilde{T}$ has a periodic point of period $\mathtt{lcm}(\dim
W_{1},\ldots ,\dim W_{d})$.
\end{proof}

Next, we prove the second main result.

\begin{proof}[Proof of Theorem \protect \ref{thm: main}]
Let $\prod_{k=1}^{d}w_{ki_{k}j_{k}}$ be a nonzero entry of $%
\bigotimes_{k=1}^{d}W_{k}$. We shall prove that the following covering
relation holds 
\begin{equation*}
\prod_{k=1}^{d}M_{ki_{k}}\overset{A\circ T}{\Longrightarrow }%
\prod_{k=1}^{d}M_{kj_{k}}.
\end{equation*}%
We shall keep the use of the following notations:$\ x_{k}\in \mathbb{R}^{u}$
and $y_{k}\in \mathbb{R}^{s}$ for $1\leq k\leq d$, $x=\prod_{k=1}^{d}x_{k}%
\in (\mathbb{R}^{u})^{d}=\mathbb{R}^{ud}$, $y=\prod_{k=1}^{d}y_{k}\in (%
\mathbb{R}^{s})^{d}=\mathbb{R}^{sd}$, $\prod_{k=1}^{d}(x_{k},y_{k})\in (%
\mathbb{R}^{u+s})^{d}=\mathbb{R}^{(u+s)d}$, and $(x,y)\in \mathbb{R}%
^{ud}\times \mathbb{R}^{sd}=\mathbb{R}^{(u+s)d}$.

First, we check conditions on h-sets. For convenience, we denote $%
M=\prod_{k=1}^{d}M_{ki_{k}}$ and $M^{\prime }=\prod_{k=1}^{d}M_{kj_{k}}$.\
Then $M$ and\ $M^{\prime }$ are h-sets, together with constants $%
u(M)=u(M^{\prime })=ud$ and $s(M)=s(M^{\prime })=sd,$ and homeomorphisms $%
c_{M},c_{M^{\prime }}:\mathbb{R}^{(u+s)d}\rightarrow \mathbb{R}^{ud}\times 
\mathbb{R}^{sd}$, defined as follows, for all $x_{k}\in \mathbb{R}^{u}$ and $%
y_{k}\in \mathbb{R}^{s}$, $1\leq k\leq d,$ 
\begin{equation*}
c_{M}(\prod_{k=1}^{d}(x_{k},y_{k}))=\varsigma \circ \prod_{k=1}^{d}\tilde{c}%
_{M_{ki_{k}}}(x_{k},y_{k}),\text{ and }c_{M^{\prime
}}(\prod_{k=1}^{d}(x_{k},y_{k}))=\varsigma \circ \prod_{k=1}^{d}\tilde{c}%
_{M_{kj_{k}}}(x_{k},y_{k}),
\end{equation*}%
where $\varsigma :\mathbb{R}^{(u+s)d}\rightarrow \mathbb{R}^{(u+s)d}$ is
defined by $\varsigma (\prod_{k=1}^{d}(x_{k},y_{k}))=(x,y)$.

Second, we construct a homotopy such that equations (\ref{eq:h5})-(\ref%
{eq:h7}) holds. Define a homotopy $H:[0,1]\times \overline{B^{ud}}\times 
\overline{B^{sd}}\rightarrow \mathbb{R}^{(u+s)d}$ by, if $0\leq t\leq 1/2$,
then

\begin{equation*}
H(t,x,y)=(1-2t)c_{M^{\prime }}\circ A\circ T\circ c_{M}^{-1}(x,y)+2t\pi
_{1}\circ c_{M^{\prime }}\circ A\circ T\circ c_{M}^{-1}(x,y),
\end{equation*}%
and if $1/2<t\leq 1$, then%
\begin{equation*}
H(t,x,y)=(2-2t)\pi _{1}\circ c_{M^{\prime }}\circ A\circ T\circ
c_{M}^{-1}(x,y)+(2t-1)(\prod_{k=1}^{d}(a_{k\tau (k)}U_{\tau (k)i_{\tau
(k)}}(x_{\tau (k)})-p_{kj_{k}}^{u}),0).
\end{equation*}%
where $\pi _{1}:\mathbb{R}^{(u+s)d}\rightarrow \mathbb{R}^{(u+s)d}$ is
defined by $\pi _{1}(x,y)=(x,0)$ for all $x\in \mathbb{R}^{ud}$ and $y\in 
\mathbb{R}^{sd}$. Clearly, (\ref{eq:h5}) holds.

Before checking (\ref{eq:h6}) and (\ref{eq:h7}), we derive a new form for
the homotopy. Define $\breve{c}_{N}=(\prod_{k=1}^{d}\hat{c}_{N_{k}})$. Let $%
(x,y)\in \overline{B^{ud}}\times \overline{B^{sd}}=M_{c}$. Then by the
definitions of $\tilde{c}_{N_{kj_{k}}}$, we get that 
\begin{equation*}
\breve{c}_{N}\circ T\circ \prod_{k=1}^{d}\tilde{c}%
_{N_{kj_{k}}}^{-1}(x_{k},y_{k})=%
\prod_{k=1}^{d}(U_{ki_{k}}(x_{k}),V_{ki_{k}}(y_{k}))\in c_{N}(T(M)).
\end{equation*}%
Thus, 
\begin{eqnarray*}
c_{M^{\prime }}\circ A\circ T\circ c_{M}^{-1}(x,y) &=&\varsigma \circ
(\prod_{l=1}^{d}\tilde{c}_{N_{lj_{l}}})\circ A\circ T\circ (\prod_{k=1}^{d}%
\tilde{c}_{M_{kj_{k}}}^{-1})\circ \varsigma ^{-1}(x,y) \\
&=&\varsigma \circ (\prod_{l=1}^{d}\tilde{c}_{N_{lj_{l}}})\circ A\circ 
\breve{c}_{N}^{-1}\circ \breve{c}_{N}\circ T\circ \prod_{k=1}^{d}\tilde{c}%
_{M_{kj_{k}}}^{-1}(x_{k},y_{k}) \\
&=&\varsigma \circ (\prod_{l=1}^{d}g_{lj_{l}}\circ \hat{c}_{N_{l}})\circ
A\circ \breve{c}_{N}^{-1}\circ
\prod_{k=1}^{d}(U_{ki_{k}}(x_{k}),V_{ki_{k}}(y_{k})) \\
&=&\varsigma \circ (\prod_{l=1}^{d}g_{lj_{l}})\circ \breve{c}_{N}\circ
A\circ \breve{c}_{N}^{-1}\circ
\prod_{k=1}^{d}(U_{ki_{k}}(x_{k}),V_{ki_{k}}(y_{k})).
\end{eqnarray*}%
Moreover, by the definitions of $A_{c}$ and $g_{lj_{l}}$, we obtain that 
\begin{eqnarray*}
c_{M^{\prime }}\circ A\circ T\circ c_{M}^{-1}(x,y) &=&\varsigma \circ
(\prod_{l=1}^{d}g_{lj_{l}})\circ A_{c}\circ
\prod_{k=1}^{d}(U_{ki_{k}}(x_{k}),V_{ki_{k}}(y_{k})) \\
&=&\varsigma \circ (\prod_{l=1}^{d}g_{lj_{l}})\circ ([a_{lk}]\bigotimes
I)\circ \prod_{k=1}^{d}(U_{ki_{k}}(x_{k}),V_{ki_{k}}(y_{k})) \\
&=&\varsigma \circ
\prod_{l=1}^{d}(\sum_{k=1}^{d}a_{lk}U_{ki_{k}}(x_{k})-p_{lj_{l}}^{u},\frac{%
\sum_{k=1}^{d}a_{lk}V_{ki_{k}}(y_{k})-p_{lj_{l}}^{s}}{r_{lj_{l}}}) \\
&=&(\prod_{l=1}^{d}(\sum_{k=1}^{d}a_{lk}U_{ki_{k}}(x_{k})-p_{lj_{l}}^{u}),%
\prod_{l=1}^{d}(\frac{\sum_{k=1}^{d}a_{lk}V_{ki_{k}}(y_{k})-p_{lj_{l}}^{s}}{%
r_{lj_{l}}})).
\end{eqnarray*}%
Therefore, for $0\leq t\leq 1/2,$ 
\begin{equation*}
H(t,x,y)=(\prod_{l=1}^{d}(%
\sum_{k=1}^{d}a_{lk}U_{ki_{k}}(x_{k})-p_{lj_{l}}^{u}),(1-2t)\prod_{l=1}^{d}(%
\frac{\sum_{k=1}^{d}a_{lk}V_{ki_{k}}(y_{k})-p_{lj}^{s}}{r_{lj_{l}}})),
\end{equation*}%
and, for $1/2<t\leq 1,$%
\begin{equation*}
H(t,x,y)=(\prod_{l=1}^{d}(a_{l\tau (l)}U_{\tau (l)i_{\tau (l)}}(x_{\tau
(l)})-p_{lj_{l}}^{u}+(2-2t)\sum_{k=1,k\neq \tau
(l)}^{d}a_{lk}U_{ki_{k}}(x_{k})),0).
\end{equation*}

For checking (\ref{eq:h6}), consider $(x,y)\in M_{c}^{-}$. Then there exists 
$1\leq \beta \leq d$ such that $|x_{\beta }|=1$. Since $\tau $ is a
permutation, there exists $1\leq \gamma \leq d$ such that $\tau (\gamma
)=\beta $. By (\ref{eq: thm2}), we have that 
\begin{equation*}
||a_{\gamma \beta }U_{\beta i_{\beta }}-p_{\gamma j_{\gamma }}^{u}||_{\min
}-\sum_{k=1,k\neq \beta }^{d}||a_{\gamma k}U_{ki_{k}}||_{\max }>1\text{.}
\end{equation*}%
It implies that 
\begin{eqnarray}
|\sum_{k=1}^{d}a_{\gamma k}U_{ki_{k}}(x_{k})-p_{\gamma j_{\gamma }}^{u}|
&\geq &|a_{\gamma \beta }U_{\beta i_{\beta }}(x_{\beta })-p_{\gamma
j_{\gamma }}^{u}|-\sum_{k=1,k\neq \beta }^{d}|a_{\gamma k}U_{ki_{k}}(x_{k})|
\notag \\
&\geq &||a_{\gamma \beta }U_{\beta i_{\beta }}-p_{\gamma j_{\gamma
}}^{u}||_{\min }-\sum_{k=1,k\neq \beta }^{d}||a_{\gamma k}U_{ki_{k}}||_{\max
}  \notag \\
&>&1.  \label{eq:5}
\end{eqnarray}%
and, for $1/2<t\leq 1,$ 
\begin{eqnarray*}
&&|a_{\gamma \beta }U_{\beta i_{\beta }}(x_{\beta })-p_{\gamma j_{\gamma
}}^{s}+(2-2t)\sum_{k=1,k\neq \beta }^{d}a_{\gamma k}U_{ki_{k}}(x_{k})| \\
&\geq &|a_{\gamma \beta }U_{\beta i_{\beta }}(x_{\beta })-p_{\gamma
j_{\gamma }}^{u}|-(2-2t)\sum_{k=1,k\neq \beta }^{d}|a_{\gamma
k}U_{ki_{k}}(x_{k})| \\
&\geq &||a_{\gamma \beta }U_{\beta i_{\beta }}-p_{\gamma j_{\gamma
}}^{u}||_{\min }-\sum_{k=1,k\neq \beta }^{d}||a_{\gamma k}U_{ki_{k}}||_{\max
} \\
&>&1.
\end{eqnarray*}%
Thus $H(t,x,y)\notin M_{c}^{\prime }$ and hence (\ref{eq:h6}) holds.

For checking (\ref{eq:h7}), consider $(x,y)\in M_{c}$. Then, for $0\leq
t\leq 1/2,$ we have that 
\begin{eqnarray*}
|(1-2t)\frac{\sum_{k=1}^{d}a_{lk}V_{ki_{k}}(y_{k})-p_{lj}^{s}}{r_{lj_{l}}}|
&\leq &\frac{1}{r_{lj_{l}}}%
|\sum_{k=1}^{d}a_{lk}V_{ki_{k}}(y_{k})-p_{lj_{l}}^{s}| \\
&\leq &\frac{1}{r_{lj_{l}}}%
\sum_{k=1}^{d}||a_{lk}V_{ki_{k}}-p_{lj_{l}}^{s}||_{\max } \\
&<&1.
\end{eqnarray*}%
where the last inequality follows from (\ref{eq: thm2}). Thus, $%
H(t,x,y)\notin N_{c}^{+}$ and hence (\ref{eq:h7}) holds.

Next, we check item 2 in Definition \ref{def: covering relation}. Define a
map $\varphi :\mathbb{R}^{ud}\rightarrow \mathbb{R}^{ud}$ by, 
\begin{equation*}
\varphi (x)=\prod_{l=1}^{d}(a_{l\tau (l)}U_{\tau (l)i_{\tau (l)}}(x_{\tau
(l)})-p_{lj_{l}}^{u}).
\end{equation*}%
Then $H(1,x,y)=(\prod_{l=1}^{d}(a_{l\tau (l)}U_{\tau (l)i_{\tau
(l)}}(x_{\tau (l)})-p_{lj_{l}}^{u}),0)=(\varphi (x),0)$. By equation (\ref%
{eq:5}), we have $\varphi (\partial B^{ud})\subset \mathbb{R}^{ud}\backslash 
\overline{B^{ud}}$.

Finally, we prove that the local Brouwer degree\ $\deg (\varphi ,B^{ud},0)$
is nonzero. Define a function $g(x)=\prod_{l=1}^{d}(x_{l}-p_{lj_{l}}^{u})$
for $x\in \mathbb{R}^{ud}$, and a $d\times d$ matrix $[b_{lk}]$ such that $%
b_{l\tau (l)}=a_{l\tau (l)}$ and $b_{lk}=0$ for all $k\neq \tau \left(
l\right) $. Then 
\begin{equation*}
\varphi (x)=g\circ ([b_{lk}]\bigotimes I_{u})\circ
\prod_{k=1}^{d}U_{ki_{k}}(x_{k}),
\end{equation*}%
where $I_{u}$\ is the $u\times u$ identity matrix. In order to apply
Proposition \ref{prop: degree} for $\deg (\varphi ,B^{ud},0)$, we need to
check conditions on the affine maps $g$ and $[b_{lk}]\bigotimes I_{u}.$ By
the definition of $g$, we get that $g^{-1}(0)$ consists of a single point $%
p\equiv \prod_{l=1}^{d}p_{lj_{l}}^{u}$. By the definition of $[b_{lk}]$ and $%
b_{l\tau (l)}=a_{l\tau (l)}$, the hypothesis $p_{lj_{l}}^{u}\in a_{l\tau
(l)}U_{\tau (l)i_{\tau (l)}}(B^{u})$, together with (\ref{eq: thm2}),
implies that the matrix $[b_{lk}]$ is invertible; otherwise, $||a_{l\tau
(l)}U_{\tau (l)i_{\tau (l)}}-p_{lj_{l}}^{u}||_{\min }=0$ leads a
contradiction. Hence, $([b_{lk}]\bigotimes I_{u})^{-1}(p)$ lies in a bounded
connected component of $\mathbb{R}^{ud}\backslash
(\prod_{k=1}^{d}U_{ki_{k}})(\partial B^{ud})$. Since $\deg
(U_{li_{l}},B^{u},p_{lj_{l}}^{u})\neq 0$, we have $p_{lj_{l}}^{u}\in
U_{li_{l}}(B^{u})$ and hence $p\in \prod_{l=1}^{d}U_{li_{l}}(B^{u})$. By
applying Proposition \ref{prop: degree}, since $\deg
(U_{li_{l}},B^{u},p_{lj_{l}}^{u})\neq 0$ for all $1\leq l\leq d$, we obtain
that 
\begin{eqnarray*}
\deg (\varphi ,B^{ud},0) &=&\mathtt{sgn}(\det (Dg_{p}))\mathtt{sgn}(\det
([b_{lk}]\bigotimes I_{u}))\deg (\prod_{l=1}^{d}U_{li_{l}},B^{ud},p) \\
&=&\mathtt{sgn}(\det (Dg_{p}))\mathtt{sgn}(\det ([b_{lk}]\bigotimes
I_{u}))\prod_{l=1}^{d}\deg (U_{li_{l}},B^{u},p_{lj_{l}}^{u}) \\
&\neq &0.
\end{eqnarray*}

This concludes the proof of the needed covering relation. If $\tilde{T}_{k}$
and $\tilde{A}$ are both $C^{0}$ close enough to $T_{k}$ and $A$
respectively, then by Proposition \ref{prop: persistence}, the following
covering relation holds, for all nonzero entries $%
\prod_{l=1}^{d}w_{li_{l}j_{l}}$ of $W\equiv \bigotimes_{l=1}^{d}W_{l}$, 
\begin{equation*}
\prod_{l=1}^{d}M_{li_{l}}\overset{\tilde{A}\circ \tilde{T}}{\Longrightarrow }%
\prod_{l=1}^{d}M_{lj_{l}}.
\end{equation*}%
Therefore, $(\tilde{G},\{ \tilde{T}_{k}\},\tilde{A})$ has covering relations
determined by $W$. By Proposition \ref{prop: semi-conjugacy}, there exists a
compact subset $\tilde{\Lambda}$\ of $\mathbb{R}^{(u+s)d}$\ such that $%
\tilde{\Lambda}$\ is a maximal positively invariant for $\tilde{A}\circ 
\tilde{T}$\ in the union of the h-sets (with respect to $W$) and $\tilde{A}%
\circ \tilde{T}|\tilde{\Lambda}$\ is topologically semi-conjugate to $\sigma
_{W}^{+}$. Therefore, 
\begin{equation*}
h_{\mathtt{top}}(\tilde{A}\circ \tilde{T})\geq h_{\text{$\mathtt{top}$}%
}(\sigma _{W}^{+})=\mathtt{\log }(\rho (W)=\mathtt{\log }(\prod_{l=1}^{d}%
\rho (W_{l})).
\end{equation*}
\end{proof}

\noindent \textbf{\textbf{\Large Appendix}}

\medskip

\noindent First, we briefly recall some definitions from \cite{ZG04}
concerning covering relations.

\begin{definition}
\cite[Definition 6]{ZG04} \label{def: h-sets} An \emph{h-set} in $\mathbb{R}%
^{m}$ is a quadruple consisting of the following data:

\begin{itemize}
\item a nonempty compact subset $M$ of $\mathbb{R}^{m},$

\item a pair of numbers $u(M),s(M)\in \{0,1,...,m\}$ with $u(M)+s(M)=m,$

\item a homeomorphism $c_{M}:\mathbb{R}^{m}\rightarrow \mathbb{R}^{m}=%
\mathbb{R}^{u(M)}\times \mathbb{R}^{s(M)}$ with $c_{M}(M)=\overline{B^{u(M)}}%
\times \overline{B^{s(M)}},$ where $S\times T$ is the Cartesian product of
sets $S$ and $T$.
\end{itemize}

\noindent For simplicity, we will denote such an h-set by $M$ and call $%
c_{M} $ the \emph{coordinate chart} of $M$; furthermore, we use the
following notations: 
\begin{equation*}
M_{c}=\overline{B^{u(M)}}\times \overline{B^{s(M)}},\text{ }%
M_{c}^{-}=\partial B^{u(M)}\times \overline{B^{s(M)}},\text{ }M_{c}^{+}=%
\overline{B^{u(M)}}\times \partial B^{s(M)},
\end{equation*}%
\begin{equation*}
M^{-}=c_{M}^{-1}(M_{c}^{-}),\text{ and }M^{+}=c_{M}^{-1}(M_{c}^{+}).
\end{equation*}
\end{definition}

A covering relation between two h-sets is defined as follow.\textbf{\ }

\begin{definition}
\cite[Definition 7]{ZG04}\label{def: covering relation} Let $M,$ $N$ be
h-sets in $\mathbb{R}^{m}$ with $u(M)=u(N)=u$ and $s(M)=s(N)=s,$ $%
f:M\rightarrow \mathbb{R}^{m}$ be a continuous map, and $f_{c}=c_{N}\circ
f\circ c_{M}^{-1}:M_{c}\rightarrow \mathbb{R}^{u}\times \mathbb{R}^{s}$. We
say $M$\emph{\ }$f$\emph{-covers\ }$N$, and write 
\begin{equation*}
M\overset{f}{\Longrightarrow }N,
\end{equation*}%
\textbf{\ }if the following conditions are satisfied:

\begin{enumerate}
\item there exists a homotopy $h:[0,1]\times M_{c}\rightarrow \mathbb{R}%
^{u}\times \mathbb{R}^{s}$ such that 
\begin{eqnarray}
h(0,x) &=&f_{c}(x)\text{ for }x\in M_{c},  \label{eq:h5} \\
h([0,1],M_{c}^{-})\cap N_{c} &=&\emptyset ,  \label{eq:h6} \\
h([0,1],M_{c})\cap N_{c}^{+} &=&\emptyset ;  \label{eq:h7}
\end{eqnarray}

\item there exists a map $\varphi :\mathbb{R}^{u}\rightarrow \mathbb{R}^{u}$
such that 
\begin{align*}
h(1,p,q)& =(\varphi (p),0)\text{ for any }p\in \overline{B^{u}}\text{ and }%
q\in \overline{B^{s}}\text{,} \\
\varphi (\partial B^{u})& \subset \mathbb{R}^{u}\backslash \overline{B^{u}};%
\text{ and }
\end{align*}

\item there exists a nonzero integer $w$ such that the local Brouwer degree $%
\deg (\varphi ,B^{u},0)$ of $\varphi $ at $0$ in $B^{u}$ is $w$; refer to 
\cite[Appendix]{ZG04} for its properties.
\end{enumerate}
\end{definition}

A \emph{transition matrix} is a square matrix which satisfies the following
conditions:\ 

\begin{description}
\item[(i)] all entries are either zero or one,

\item[(ii)] all row sums and column sums are greater than or equal to one.
\end{description}

\noindent For a transition matrix $W$, let $\rho (W)$ denote the spectral
radius of $W$. Then $\rho (W)\geq 1$ and moreover, if $W$ is irreducible and
not a permutation, then $\rho (W)>1$. Let $\Sigma _{W}^{+}$ (resp. $\Sigma
_{W}$) be the space of all allowable one-sided (resp. two sided)\ sequences
generated by the transition matrix $W$ with a usual metric, and let $\sigma
_{W}^{+}:\Sigma _{W}^{+}\rightarrow \Sigma _{W}^{+}$ (resp. $\sigma
_{W}:\Sigma _{W}\rightarrow \Sigma _{W}$) be the one-sided (resp. two sided)
subshift of finite type for $W$. Then $h_{\mathtt{top}}(\sigma _{W}^{+})=h_{%
\mathtt{top}}(\sigma _{W})=\log (\rho (W))$ (Refer to \cite{R99} for more
background).

\begin{definition}
\label{def:subshift and covering relation} Let $W=[w_{ij}]_{1\leq i,j\leq
\gamma }$ be a transition matrix and $f$ be a continuous map on $\mathbb{R}%
^{m}$. We say that $f$ has \emph{covering relations determined by} $W$ if
the following conditions are satisfied:

\begin{enumerate}
\item there are $\gamma $ pairwisely disjoint h-sets $\{M_{i}\}_{i=1}^{%
\gamma }$ in $\mathbb{R}^{m}$;

\item if $w_{ij}=1$ then the covering relation $M_{i}\overset{f}{%
\Longrightarrow }M_{j}$ holds;
\end{enumerate}
\end{definition}


\begin{thebibliography}{99}
\bibitem{AB07} Afraimovich, V. S. and Bunimovich, L. A., \emph{Dynamical
networks: interplay of topology, interactions and local dynamics},
Nonlinearity \textbf{20} (2007) , 1761-1771.

\bibitem{ABM10} Afraimovich, V. S., Bunimovich, L. A., and Moreno, S. V., 
\emph{Dynamical networks: continuous time and general discrete time models},
Regul. Chaotic Dyn. \textbf{15} (2010), 127-145.

\bibitem{BAJ09} Bauer, F., Atay, F. M., and Jost, J., \emph{Synchronization
in discrete-time networks with general pairwise coupling}, Nonlinearity 
\textbf{22} (2009), 2333-2351.

\bibitem{CF05} Chazottes J.-R. and Fernandez B. (ed), \emph{Dynamics of
Coupled Map Lattices and or Related Specially Extended Systems}, Lecture
Notes in Physics \textbf{671}, Springer, Birlin, 2005.

\bibitem{F08} Fernandez, B., \emph{Global synchronization in translation
invariant coupled map lattices}, Internat. J. Bifur. Chaos Appl. Sci. Engrg. 
\textbf{18} (2008), 3455-3459.

\bibitem{KL10} Koiller, J. and Young, L.-S., \emph{Coupled map networks},
Nonlinearity \textbf{23} (2010), 1121-1141.

\bibitem{MZ01} Misiurewicz, M. and Zgliczy\'{n}ski, P., \emph{Topological
entropy for multidimensional perturbations of one-dimensional maps}, Int. J.
Bifurc. Chaos Appl. Sci. Eng. \textbf{11}\ (2001), 1443-1446.

\bibitem{LL11} Li, M.-C. and Lyu, M.-J., \emph{Topological dynamics for
multidimensional perturbations of maps with covering relations and strong
Liapunov condition}, J. Differential Equations \textbf{250} (2011), 799-812.

\bibitem{LLZ08} Li, M.-C., Lyu, M.-J. and Zgliczy\'{n}ski, P., \emph{%
Topological entropy for multidimensional perturbations of snap-back
repellers and one-dimensional maps}, Nonlinearity \textbf{21} (2008),
2555-2567.

\bibitem{R99} Robinson, C., \emph{Dynamical Systems: Stability, Symbolic
Dynamics, and Chaos}, CRC Press, Boca Raton, FL, 1999.

\bibitem{Sch69} Schwartz, J. T., \emph{Nonlinear Functional Analysis},
Gordon and Breach Science Publishers, New York, 1969.

\bibitem{Y08} Young, L.-S., \emph{Chaotic phenomena in three settings:
large, noisy and out of equilibrium}, Nonlinearity \textbf{21} (2008),
T245-T252.

\bibitem{ZG04} Zgliczy\'{n}ski, P. and Gidea, M., \emph{Covering relations
for multidimensional dynamical systems}, J. Differential Equations \textbf{%
202} (2004), 32-58.
\end{thebibliography}
\end{document}